\newtheorem{theorem}{Theorem}
\newtheorem{remark}[theorem]{Remark}
\newtheorem{lemma}[theorem]{Lemma}
\providecommand{\keywords}[1]{\textbf{\textit{Keywords---}} #1}
\begin{document}
	\title{ On the inverse gamma subordinator}
	\author{Fausto Colantoni \and Mirko D'Ovidio}
%	\email{fausto.colantoni@uniroma1.it}
%	\author{Mirko D'Ovidio}
%	\email{mirko.dovidio@uniroma1.it}

%\titlespacing*{\section}{0pt}{2mm plus 1mm minus 1mm}{2ex}
%\titlespacing*{\subsection}{0pt}{2mm plus 1mm minus 1mm}{2ex}
	%\cortext[cor1]{Corresponding author}
	%\address{Department of Basic and Applied Sciences for Engineering\\
%		Sapienza University of Rome, Rome Italy}
	
%	\textbf{Fausto Colantoni ; Mirko D'Ovidio } 
%	\\
%	\small{Department of Basic and Applied Sciences for Engineering\\
%	Sapienza University of Rome\\
%	via A. Scarpa 16, 00161 Rome, Italy.\\
%	E-mail address: fausto.colantoni@uniroma1.it , mirko.dovidio@uniroma1.it}
%\\
%	\medskip
%\section*{abstract}
%	 In this paper we deal with some open problems concerned with gamma subordinators. In particular, we provide a representation for the moments of the inverse gamma subordinator. Then, we focus on $\lambda$-potentials and we study the governing equations associated with gamma subordinators and inverse processes.
%	 Such representations are given in terms of \textit{higher transcendental functions}, also known as \textit{Volterra functions}

\newcommand{\Addresses}{{% additional braces for segregating \footnotesize
		\bigskip
		\footnotesize
		
		Department of Basic and Applied Sciences for Engineering, Sapienza University of Rome, Italy
		\par\nopagebreak
		\textit{E-mail address} \texttt{\{fausto.colantoni,mirko.dovidio\}@uniroma1.it}
		
		\medskip

}}

%\tableofcontents
\maketitle
\begin{abstract}
	In this paper we deal with some open problems concerned with gamma subordinators. In particular, we provide a representation for the moments of the inverse gamma subordinator. Then, we focus on $\lambda$-potentials and we study the governing equations associated with gamma subordinators and inverse processes.
	Such representations are given in terms of \textit{higher transcendental functions}, also known as \textit{Volterra functions}
\end{abstract}
\keywords{
	Gamma subordinators, inverse gamma processes, higher transcendental functions, Volterra functions, fractional calculus}
\section{Introduction}

Gamma subordinator is a well-known subordinator which has been considered in many fields of Applied Sciences. In Mathematical Finance for instance, a well-known process is the Variance Gamma Process (or \textit{Laplace motion}) which can be obtained by considering a Brownian motion with a random time given by a Gamma subordinator (\cite{levy},\cite{beghin2015fractional},\cite{lapmotion},\cite{VGoption}). We recall that a subordinator is a Lévy process with non-negative and non-decreasing paths. The Gamma subordinator is a special case in which the associated Lévy measure on $(0,\infty)$ is infinite, then the paths are increasing (\cite{sato}). 
 
In the connection between non-local analysis and Probability the Gamma subordinator plays a relevant role, many authors have investigated such a connection and the related properties, we list only few references throughout the work. Recently, in \cite{beghin} some new operators associated with Gamma subordinators appear whereas, in \cite{delrus} the connection between parabolic and elliptic problems in case of Gamma (and inverse Gamma) time change is considered. 

Despite the fact that Gamma subordinators are well-known objects, deeply investigated in the past years, there is a lack in the theory concerned with inverse Gamma subordinators. At the current stage there are some results on
the moments of the inverse gamma subordinators only concerned with their
 asymptotic behavior (\cite{kumarPS},\cite{delrus}) and their Laplace transforms (\cite{kumarPS}, \cite{kumar2},\cite{veillette}).

Our main contributions, in order to close such a gap, are stated in Section \ref{sec:MainResults}. We obtain a new representation for the moments of any real (positive) order. Moreover, we obtain explicitly, in a closed form, the potentials and the Sonine kernels concerned with our case study, that is for Gamma subordinators and their inverses. We also provide a detailed discussion on densities and governing equations in order to have a clear picture about the processes we deal with. Indeed, the Gamma subordinator belongs to a special class of time-dependent continuous functions. 

In our analysis a central role has been played by the functions $\nu$ and $\mu$, two \textit{higher transcendental functions} (see for example the book \cite{EHIII}), also known as \textit{Volterra functions} (\cite{apelblat},\cite{mainardivolterra}). The function $\nu$  has been introduced by Volterra (1916) in his theory of convolution-logarithms (\cite{volterra}), hence the name. The importance of these functions does not surprise, indeed they seem to be the analogue of the Mittag-Leffler function in case of stable subordinator and the corresponding inverses. The interest readers can consult the book \cite{mittag} for the Mittag-Leffler function.

\section{Preliminaries}
\subsection{Gamma subordinators}
\label{Sec:GammaSub}

%where the symbol has the following Bernstein representation
%\begin{align*}
%\Phi(\lambda) = a \ln \left(1 + \frac{\lambda}{b} \right) = a \int_0^\infty \left(1 - e^{- \lambda y} \right) \frac{e^{- b y}}{y}\, dy, \quad \lambda \geq 0, \quad a>0, \; b>0
%\end{align*}
%The Bernstein functions are uniquely defined by
%\begin{align*}
%\Phi(\lambda)=\int_0^\infty (1-e^{-\lambda z}) \Pi(dz) \ , \quad \lambda \geq 0
%\end{align*}
%where $\Pi$ on $(0,\infty)$ with $\int_0^\infty (1 \wedge z) \Pi(dz) <\infty$ is the associated Lévy measure. We also recall
%\begin{align*}
%\frac{\Phi(\lambda)}{\lambda}=\int_0^\infty e^{-\lambda z} \overline{\Pi}(z) dz , \quad \overline{\Pi}(z)=\Pi((z,\infty))
%\end{align*}
%and $\overline{\Pi}$ is called \textit{tail of the Lévy measure}, for more details see \cite{bertoin}.\\
%Let we focus on $\Pi(dx)=a \frac{e^{-bx}}{x}  dx$ and introduce the inverse process

We introduce the Bernstein function $\Phi: (0, \infty) \mapsto (0, \infty)$ which is uniquely defined by the so-called Bernstein representation
\begin{align*}
\Phi(\lambda)=\int_0^\infty (1-e^{-\lambda z}) \Pi(dz) , \quad \lambda \geq 0
\end{align*}
where $\Pi$ on $(0,\infty)$ with $\int_0^\infty (1 \wedge z) \Pi(dz) <\infty$ is the associated Lévy measure. We also recall that
\begin{align}
\label{lapTailLm}
\frac{\Phi(\lambda)}{\lambda}=\int_0^\infty e^{-\lambda z} \overline{\Pi}(z) dz,
\end{align}
where $\overline{\Pi}(z)=\Pi((z,\infty))$ is termed \textit{tail of the Lévy measure} (see \cite{bertoin} Section 1.2 for details).\\

From now on we consider the symbol
\begin{align}
\label{symbGamma}
\Phi(\lambda) = a \ln \left(1 + \frac{\lambda}{b} \right) = a \int_0^\infty \left(1 - e^{- \lambda y} \right) \frac{e^{- b y}}{y}\, dy, \quad \lambda \geq 0, \quad a>0, \; b>0
\end{align}
and the associated gamma subordinator $H=\{H_t\}_{t\geq 0}$ for which 
\begin{align}
\label{LapH}
\mathbf{E}_0[e^{-\lambda H_t}] = e^{-t \Phi(\lambda)}, \quad \lambda \geq 0.
\end{align}
Since $\Pi(0,\infty)=\infty$, then from Theorem 21.3 of \cite{sato}, we have that $H_t$ has increasing sample path with jumps. The inverse process
\begin{align*}
L_t := \inf\{s>0\,:\, H_s \notin (0, t) \}, \quad t>0
\end{align*}
can be regarded as an exit time for $H$. In particular, the process $L=\{L_t\}_{t\geq 0}$ turns out to be non-decreasing with continuous paths and it can be associated with some delaying or rushing effects (see \cite{delrus}). By definition of inverse process, we can also write
\begin{align}
\label{inverseRelation}
\mathbf{P}_0(H_t<s)=\mathbf{P}_0(L_s>t) , \ \ \ s,t>0
\end{align}
where $\mathbf{P}_x$ denote the probability measure for a process started from $x$ at time $t=0$ and $\mathbf{E}_x$ the mean value with respect to $\mathbf{P}_x$. Here we have that $L_0=0$ and $H_0=0$. We also use the notation
\begin{align*}
\mathbf{P}_0(H_t \in dx) = h(t,x)\,dx, \quad \mathbf{P}_0(L_t \in dx) = l(t,x)\, dx.
\end{align*}
It is well known that, $\forall\, t > 0$,
\begin{align}
\label{repHsing} 
h(t,x) 
= & \left\lbrace
\begin{array}{ll}
\displaystyle \frac{b^{at}}{\Gamma(at)} x^{at-1} e^{-bx}, & x>0\\
\displaystyle  0, & x \leq 0
\end{array}
\right.
\end{align}
verifies
\begin{align*}
\mathbf{P}_0(H_t < 0) =0 \quad \textrm{and} \quad \mathbf{P}_0(H_t\geq 0 ) =1 \quad \forall t \geq 0
\end{align*}
and
\begin{align}
\label{laph}
\displaystyle \int_0^\infty e^{-\lambda x} h(t,x)\, dx = \frac{b^{at}}{(\lambda + b)^{at}} = e^{- t\, a\,\ln\left(1+ \frac{\lambda}{b}\right)}, \quad \lambda>0, \ t >0
\end{align}
which agrees with formula \eqref{LapH}. However, we immediately see that
\begin{align*}
h(t,x) \to 0 \quad \textrm{as} \quad x\downarrow 0 \quad \textrm{only if} \quad at>1.
\end{align*}
The property for a Lévy process to be continuous and derivable  depending on the time variable is known as \textit{time dependent property} (see \cite[Chapter 23]{sato}). Such a property turns out to be quite demanding from the analytical point of view. We conclude the discussion about $H$ by recalling that, from the representation \eqref{repHsing}, we are able to evaluate the moments
\begin{align}
\label{momentsH}
	\mathbf{E}_0[(H_t)^q]=\frac{1}{b^q} \frac{\Gamma(at+q)}{\Gamma(at)}, \quad q \geq 1, \; t>0.
\end{align}

In \cite[formula (17)]{kumarPS}, the authors provide the following representation for the density of $L$, 
\begin{align*}
l(t,x)=\frac{a e^{-t}}{\pi} \int_0^\infty \frac{y^{-ax} e^{-yt}}{1+y}\left(\pi\cos(a \pi x)-\ln(y)\sin(a \pi x) \right)\ dy, \ ax \notin \mathbb{N}
\end{align*}
which is written in their work for $b=1$ (our notation).

 Concerning the analogue of formulas \eqref{laph} and \eqref{momentsH}, the explicit representation for the process $L$ is an open problem.

\subsection{Special functions}
Let us introduce the incomplete gamma function
\begin{align}
\label{defGammaInc}
\gamma(a,z)=\int_0^z e^{-y} y^{a-1} dy, \quad  a>0, \; z \geq 0
\end{align}
and the digamma function
\begin{align}
\label{defdigamma}
\psi_0(z)=\frac{1}{\Gamma(z)} \int_0^\infty y^{z-1} \ln y \ e^{-y} dy, \quad z \geq 0.
\end{align}
\begin{figure}[h]
	\centering
	\includegraphics[width=5.5cm]{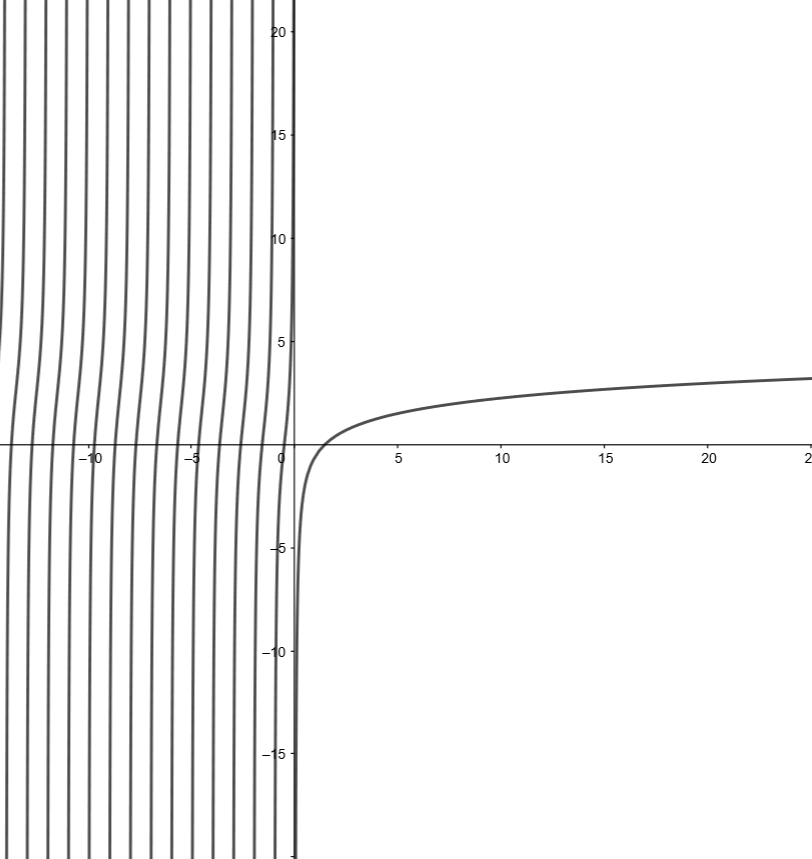}
	\caption{$\psi_0(x)$}
	\label{Figpsi}
\end{figure}
We notice that the incomplete gamma function $\gamma(a,z)$ is a Bernstein function and therefore we can use \eqref{defGammaInc} in order to define a new symbol for a subordinator (see \cite{beghingammainc}).
We now present some special functions which have been introduced in \cite[Section 18.3]{EHIII} and which will be useful further on. Let $\alpha\in(-\infty,+\infty),\beta \in (-1,+\infty)$, we define
\begin{align}
\label{defmu}
&\mu (x,\beta,\alpha):=\int_0^\infty \frac{x^{\alpha + y} y^\beta}{\Gamma(\beta +1) \Gamma(\alpha + y +1)} dy, \quad x>0.
\end{align}
The integral in \eqref{defmu}, focusing on complex variables, defines an analytical function in $x$, with branch-points in $x=0$ and $x=\infty$. It is an entire function of $\alpha$. For our convenience we also introduce the function
\begin{align}
\label{defnu}
\nu(x,\alpha):=\mu(x,0,\alpha) .
\end{align}
An further definition can be given as follows (\cite[formula (33.3)]{apelblat})
\begin{align}
\label{nugammainc}
\nu(x,\alpha)= e^x \int_\alpha^{\alpha+1} \frac{\gamma(y,x)}{\Gamma(y)} dy.
\end{align}
%{\color{red} We notice that
%\begin{align*}
%\int_0^\infty e^{-\lambda} \mu(x, \beta, \alpha)\, dx = \lambda^{-\alpha-1}(\ln \lambda)^{-\beta - 1}, \quad \lambda>0.
%\end{align*}
%From this we can verify that (\textcolor{blue}{unico problema: nella definizione di $\mu$ chiediamo $\beta>-1$ allora $-\beta-1 < -1$, non penso di poterlo definire...})
%\begin{align*}
%\int_0^x \mu(x-y, -\beta - 1, -\alpha-1) \, \mu(y, \beta-1, \alpha-1)\, dy = 1,
%\end{align*} 
%that is, formula \eqref{convSonine} holds true.}
%In  Figure \ref{FigArea} we show the shape of $\nu$ and $\mu$ in some cases.
%\begin{figure}[h]
%	\centering
%	\includegraphics[width=5.5cm]{area_nu_new1.png} \qquad
%	\includegraphics[width=5.5cm]{area_nu_new1-5.png}
%	\includegraphics[width=5.5cm]{area_nu_new2.png} \qquad
%	\includegraphics[width=5.5cm]{area_nu_new2-5.png}
%	\caption{For $x=1$ on top left, $x=1.5$ on top right,  $x=2$ on bottom left and $x=2.5$ on bottom right, the integral function $\mu(x,0,-1)$ can be approximated by the black area whereas the integral function $\mu(x,5,-1)$ by the gray area.}
%	\label{FigArea}
%\end{figure}
The Laplace transforms of these special functions can be easily obtained, in particular 
\begin{align}
\label{lapnu}
\int_0^\infty e^{-\lambda x} \nu(x,\alpha) dx&= \frac{1}{\lambda^{\alpha+1}\ln \lambda} \\
\label{lapmu}
\int_0^\infty e^{-\lambda x} \mu(x,\beta,\alpha) dx&= \frac{1}{\lambda^{\alpha+1}(\ln \lambda)^{\beta+1}} ,
\end{align}
with $\lambda >1$.
%\begin{remark}
%	\boldred{$\nu$ analitica: } $\nu$ as function of $x$ is an integral of analytic function and the integral converges, so $\nu$ is analytical in $x$.
%\end{remark}
%Now a lemma to connect $\mu$ and $\nu$ 	(errata corrige in \cite{EHIII} page 221 formula (14) \boldred{lo scriviamo esplicitamente o facciamo finta di niente?}).
The functions $\mu$ and $\nu$ belongs to the class of higher transcendental functions as well as the famous Mittag-Leffler function. Such functions have been introduced in \cite{EHIII} together with a detailed discussion. However, in our view there is a misprint in \cite[page 221, formula (14)]{EHIII}. Thus, for the reader's convenience, we provide the following revised result concerning that formula.
\begin{lemma}
Let $\alpha \in \mathbb{R}$. The following holds true
	\begin{align}
	\label{serienumu}
	e^{-\alpha \vartheta} \nu (x e^\vartheta ,\alpha) =\sum_{k=0}^{\infty} \vartheta^k \mu (x,k,\alpha), \quad  x >0, \; \vartheta \in \mathbb{R}.
	\end{align}
\end{lemma}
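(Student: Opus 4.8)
The plan is to verify the identity by expanding the left-hand side and recognizing the resulting series as the right-hand side, using the integral representation \eqref{defmu}--\eqref{defnu}. First I would write, from \eqref{defnu} and \eqref{defmu},
\begin{align*}
e^{-\alpha\vartheta}\,\nu(xe^\vartheta,\alpha)
= e^{-\alpha\vartheta}\int_0^\infty \frac{(xe^\vartheta)^{\alpha+y}}{\Gamma(\alpha+y+1)}\,dy
= \int_0^\infty \frac{x^{\alpha+y}\,e^{y\vartheta}}{\Gamma(\alpha+y+1)}\,dy,
\end{align*}
the factor $e^{-\alpha\vartheta}$ cancelling against the $e^{\alpha\vartheta}$ coming out of $(xe^\vartheta)^{\alpha+y}=x^{\alpha+y}e^{(\alpha+y)\vartheta}$. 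Then I would insert the exponential series $e^{y\vartheta}=\sum_{k\ge 0}\vartheta^k y^k/k!$ and interchange sum and integral.

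Having done the interchange, the $k$-th term is
\begin{align*}
\frac{\vartheta^k}{k!}\int_0^\infty \frac{x^{\alpha+y}\,y^k}{\Gamma(\alpha+y+1)}\,dy
= \vartheta^k \int_0^\infty \frac{x^{\alpha+y}\,y^k}{\Gamma(k+1)\,\Gamma(\alpha+y+1)}\,dy
= \vartheta^k\,\mu(x,k,\alpha),
\end{align*}
using $k!=\Gamma(k+1)$ and comparing with \eqref{defmu} with $\beta=k$. Summing over $k$ gives exactly the right-hand side of \eqref{serienumu}, which completes the formal argument.

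The main obstacle is the rigorous justification of exchanging $\sum_k$ and $\int_0^\infty dy$. For this I would pass to absolute values and estimate
\begin{align*}
\sum_{k=0}^\infty \frac{|\vartheta|^k}{k!}\int_0^\infty \frac{x^{\alpha+y}\,y^k}{\Gamma(\alpha+y+1)}\,dy
= \int_0^\infty \frac{x^{\alpha+y}\,e^{|\vartheta|y}}{\Gamma(\alpha+y+1)}\,dy,
\end{align*}
and observe that the last integral is finite for every fixed $x>0$ and every $\vartheta\in\mathbb{R}$: indeed $x^{\alpha+y}=x^\alpha e^{y\ln x}$, so the integrand behaves like $e^{y(\ln x+|\vartheta|)}/\Gamma(\alpha+y+1)$, and since $\Gamma(\alpha+y+1)$ grows faster than any exponential in $y$ (by Stirling), the integral converges. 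Hence Tonelli's theorem licenses the interchange, and Fubini's theorem then applies to the signed series, making the computation above rigorous. I would also note in passing that the identity is consistent with the Laplace transform \eqref{lapnu}: applying $\int_0^\infty e^{-\lambda x}(\cdot)\,dx$ to both sides (for $\lambda>1$ large enough) and using \eqref{lapnu}, \eqref{lapmu} turns the claim into $e^{-\alpha\vartheta}\lambda^{-\alpha-1}e^{\vartheta}/\ln(\lambda e^{-\vartheta}) \overset{?}{=}\sum_k \vartheta^k \lambda^{-\alpha-1}(\ln\lambda)^{-k-1}$, i.e. a geometric series in $\vartheta/\ln\lambda$ after simplification, which checks out and serves as a useful sanity test but is not needed for the proof itself.
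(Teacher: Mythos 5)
Your proof is correct and follows essentially the same route as the paper's: expand $(xe^\vartheta)^{\alpha+y}$, cancel $e^{\alpha\vartheta}$, insert the series for $e^{y\vartheta}$, interchange sum and integral to recognize $\vartheta^k\mu(x,k,\alpha)$, with the Laplace transforms \eqref{lapnu}--\eqref{lapmu} as a sanity check. Your explicit Tonelli justification of the interchange (via the superexponential growth of $\Gamma(\alpha+y+1)$) is a welcome addition that the paper leaves implicit; only the displayed formula in your Laplace-transform aside has a small bookkeeping slip in the exponential prefactor, which is immaterial since that check is not part of the argument.
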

\begin{proof}
Since
	\begin{align*}
	\displaystyle
	\nu(x e^\vartheta, \alpha)&= \int_0^\infty \frac{x^{\alpha + t}  (e^\vartheta)^{\alpha+t}}{\Gamma(\alpha +t+1)} dt =\int_0^\infty \frac{x^{\alpha + t}  e^{\vartheta \alpha}e^{\vartheta t}}{\Gamma(\alpha +t+1)} dt \\
	&=e^{\alpha \vartheta} \int_0^\infty \frac{x^{\alpha +t}\sum_{k=0}^{\infty}  \frac{(\vartheta t)^k}{\Gamma(k+1)}}{\Gamma(\alpha + t+ 1)} dt
	\\
	%direi convergenza dominata dove come funzione limiti usi lo stesso esponenziale oppure convergenza uniforme serie taylor exp
	&=e^{\alpha \vartheta} \sum_{k=0}^{\infty} \vartheta^k \int_{0}^{\infty} \frac{x^{\alpha+t}t^k }{\Gamma(k+1) \Gamma(\alpha+t+1)} dt
	\\
	&=e^{\alpha \vartheta} \sum_{k=0}^{\infty} \vartheta^k \ \mu(x,k,\alpha)
	\end{align*}
we get the claimed result. A further check can be given by considering the Laplace transforms \eqref{lapnu} and \eqref{lapmu} in order to achieve the equality \eqref{serienumu}.
\end{proof}

\section{Governing equations}

Let us consider a continuous function $\varphi$ on $\mathbb{R}$ extended with zero on the negative part of the real line, that is $\varphi(x)=0$ if $x \leq 0$. From the Bochner subordination rule or in general, from the Phillips' representation (\cite{phillips}) we are able to obtain new operators through subordination. By following the same spirit we obtain a Marchaud (type) operator given by
\begin{align}
\label{genH}
-\Phi(-\partial_x) \varphi(x)=a\int_0^\infty (\varphi(x) - \varphi(x-y))\frac{e^{-by}}{y} dy.
\end{align}
Indeed, by taking into account \eqref{symbGamma}, we can immediately check that
\begin{align*}
\int_0^\infty e^{-\xi x} \Phi(-\partial_x) \varphi(x) \ dx = - \Phi(\xi)\tilde{\varphi}(\xi), \quad \xi >0
\end{align*}
where $\widetilde{\varphi}$ denote the Laplace transform of $\varphi$. We notice that, if $\varphi$ is $\alpha-$ Hölder, that is $|\varphi(x) - \varphi(x-y)|\leq M\, |y|^\alpha$ with $\alpha>0$ for some $M>0$, then 
\begin{align*}
| \Phi(-\partial_x) \varphi (x) | \leq a\,M\,\frac{\Gamma(\alpha)}{b^\alpha}.
\end{align*}
Notice that the Marchaud operator (of order $\alpha$) is well defined for (essentially) bounded and $\gamma$-H\"older functions with $\gamma>\alpha$. The readers can consult the interesting work  \cite{Ferrari} for further details. The operator \eqref{genH} can be also written as follows
\begin{align}
\label{RLH}
\mathcal{D}^\Phi_x \varphi (x) = \frac{d}{dx} \int_0^x \varphi(x-y)\, \overline{\Pi}(y)\, dy
\end{align}
which can be regarded as a Riemann-Liouville definition. From \eqref{lapTailLm} and the fact that $\varphi(0)=0$, we still get
\begin{align*}
\int_0^\infty e^{-\xi x} \mathcal{D}_x^\Phi \varphi(x) \ dx= \xi \tilde{\varphi}(\xi) \frac{\Phi(\xi)}{\xi}=\Phi(\xi)\tilde{\varphi}(\xi), \quad \xi >0.
\end{align*}
Since $\varphi(0)=0$, the Riemann-Liouville representation \eqref{RLH} is also equivalent to the Caputo-Dzherbashian (type) non-local operator
\begin{align}
\label{CapDef}
\mathfrak{D}^\Phi_x \varphi(x) = \int_0^x \varphi^\prime (x-y) \, \overline{\Pi}(y)\, dy
\end{align}
where $\varphi^\prime = d \varphi / dx$.  In the literature, the Caputo-Dzherbashian fractional derivative is well-known. The operator \eqref{CapDef} is similarly defined except for the convolution kernel. Caputo introduced his derivative in \cite{CapWork1, CapWork2} whereas, the second author actively investigated this operator starting from the papers  \cite{DzWork1, DzWork2}. The operator \eqref{CapDef} is well-defined for 
\begin{align*}
\varphi \in L^1(0, \infty) \quad \textrm{such that} \quad \varphi^\prime(x-y) \overline{\Pi}(y) \in L^1(0,x),\; \forall x.
\end{align*}
From the regularity of $\overline{\Pi}$, it turns out that \eqref{CapDef} is well-defined as $\varphi \in AC(0, \infty)$, that is the set of continuous functions on $(0, \infty)$ with $\varphi^\prime \in L^1(0, \infty)$. Moreover, this is confirmed from the Young's inequality for convolution from which
\begin{align}
\label{boundCD}
\|\mathfrak{D}^\Phi_x \varphi\|_1 \leq \| \varphi^\prime \|_1 \, \lim_{\lambda \downarrow 0} \frac{\Phi(\lambda)}{\lambda}
\end{align}
where, as a quick check shows, 
\begin{align}
\label{quickCheck}
\frac{1}{\lambda} a \ln \left( 1 + \frac{\lambda}{b} \right) \to \frac{a}{b} \quad \textrm{as} \quad \lambda \to 0.
\end{align}
As usual we denote by $\| \cdot \|_1$ the $L^1$-norm. Sometimes,  for the sake of clarity, we write $L^1(dx)$ or $L^1(dt)$ in place of $L^1(0, \infty)$ if some confusion may arise.

In view of \eqref{boundCD} we now introduce the spaces
\begin{align*}
W^{1,1}(0, \infty) = \{ \psi \in L^1(0, \infty):\;  \psi^\prime \in L^1(0, \infty) \}
\end{align*}
and
\begin{align*}
W^{1,1}_0 (0, \infty) = \{ \psi \in W^{1,1}(0, \infty):\, \psi(0)=0\}.
\end{align*}
Observe that $AC(I)$ coincides with $W^{1,1}(I)$ if $I$ is bounded. 

Before proceeding, let us remember that the Laplace transform is well defined for piecewise continuous functions of exponential order. In particular, let us consider $\varphi_1, \varphi_2$ on $(0, \infty)$ with Laplace transforms $\widetilde{\varphi}_1, \widetilde{\varphi}_2$. We recall that (Lerch's theorem) if $\widetilde{\varphi}_1=\widetilde{\varphi}_2$, then $\varphi_1=\varphi_2$ up to isolated points where at least one of the two functions is not continuous. Let $\mathcal{M}_\eta$ be the set of piecewise continuous functions of order $\eta\geq 0$, that is $|\varphi(z)| \leq C\, e^{\eta z}$ with $\widetilde{\varphi}$ defined on $(\eta, \infty)$. Further on we focus on the set of functions $\mathcal{M}_0 \cap C(0,\infty)$. 

We introduce the next result by first noticing that, from the representation \eqref{repHsing},  $h \nin W^{1,1}_0(0, \infty)$. This is because of the time dependent property introduced above in Section \ref{Sec:GammaSub}. Thus, the problem to address the right PDE for $h$ must be taken with some care. We now present the following result.

\begin{theorem} 
	\label{teoH}
	Let $v(t,x) \in C^{1,1}((0,\infty) , W^{1,1}_0(0,\infty);[0,\infty))$ be the solution to
	\begin{equation}
	\label{eqh}
	\left\lbrace	
	\begin{array}{ll}
	\displaystyle \frac{\partial}{\partial t} v(t,x) = - \mathcal{D}_x^\Phi v(t,x), &  t>0,\, x \in (0,\infty),\\
	\displaystyle v(0,x)=f(x), & f \in W_0^{1,1}(0, \infty),\\
	\displaystyle v(t,x)=0, & t>0,\, x \in (-\infty, 0]. 
	\end{array}	
	\right.	
	\end{equation}
	Then,
	\begin{align}
	\label{SOLv}
	v(t,x) = \int_0^x f(x-y)\, h(t,y)\, dy = \mathbf{E}_0[f(x-H_t), t < L_x].
	\end{align}
\end{theorem}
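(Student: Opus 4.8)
The strategy is to verify that the candidate function $v(t,x) = \int_0^x f(x-y)\,h(t,y)\,dy$ both satisfies the PDE in \eqref{eqh} and lies in the stated regularity class, and then to invoke a uniqueness argument to conclude it is \emph{the} solution. I would work primarily on the Laplace side in the spatial variable: set $\widetilde{v}(t,\xi) = \int_0^\infty e^{-\xi x} v(t,x)\,dx$. Since $v$ is a convolution (in $x$) of $f$ and $h(t,\cdot)$, one gets $\widetilde{v}(t,\xi) = \widetilde{f}(\xi)\,\widetilde{h}(t,\xi) = \widetilde{f}(\xi)\, e^{-t\Phi(\xi)}$ by \eqref{laph}. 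Differentiating in $t$ gives $\partial_t \widetilde{v}(t,\xi) = -\Phi(\xi)\,\widetilde{f}(\xi)\, e^{-t\Phi(\xi)} = -\Phi(\xi)\,\widetilde{v}(t,\xi)$. On the other hand, the identity recorded just before the theorem, $\int_0^\infty e^{-\xi x}\mathcal{D}_x^\Phi \varphi(x)\,dx = \Phi(\xi)\widetilde{\varphi}(\xi)$, valid for $\varphi(0)=0$, shows that $-\Phi(\xi)\widetilde{v}(t,\xi)$ is exactly the Laplace transform of $-\mathcal{D}_x^\Phi v(t,\cdot)$, provided $v(t,0)=0$ — which holds because $v(t,0)=\int_0^0 \cdots = 0$. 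By Lerch's theorem (invoked in the excerpt) the two sides agree as functions of $x$, so $\partial_t v = -\mathcal{D}_x^\Phi v$ pointwise in the sense required.

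Next I would check the initial and boundary conditions and the regularity. The boundary condition $v(t,x)=0$ for $x\le 0$ is immediate from the domain of integration (equivalently, $f$ and $h$ are supported on $[0,\infty)$). For the initial condition, as $t\downarrow 0$ one has $h(t,\cdot)\to \delta_0$ weakly (the gamma density concentrates at $0$), so $v(t,x)\to f(x)$; to make this rigorous for $f\in W_0^{1,1}$ I would write $v(t,x)-f(x) = \int_0^x (f(x-y)-f(x))h(t,y)\,dy - f(x)\int_x^\infty h(t,y)\,dy$ and control both pieces using absolute continuity of $f$ and the mass of $h(t,\cdot)$ escaping to large $y$ (plus $f(0)=0$ to kill boundary terms). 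For the regularity claim $v\in C^{1,1}((0,\infty),W_0^{1,1}(0,\infty);[0,\infty))$: membership in $W_0^{1,1}$ for fixed $t$ follows from Young's inequality, $\|v(t,\cdot)\|_1\le \|f\|_1\,\|h(t,\cdot)\|_1 = \|f\|_1$, together with $\partial_x v(t,x) = \int_0^x f'(x-y)h(t,y)\,dy$ (differentiating under the integral, using $f(0)=0$ to avoid a boundary term), which is again an $L^1$ convolution; and $v(t,0)=0$ as noted. The $C^{1,1}$-in-$t$ statement comes from $\partial_t v = -\mathcal{D}_x^\Phi v$ and the bound \eqref{boundCD}.

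Finally, the probabilistic representation $v(t,x) = \mathbf{E}_0[f(x-H_t),\, t<L_x]$: by \eqref{inverseRelation}, $\{t < L_x\} = \{H_t < x\}$, so $\mathbf{E}_0[f(x-H_t),\,t<L_x] = \mathbf{E}_0[f(x-H_t)\mathbbm{1}_{H_t<x}] = \int_0^x f(x-y)h(t,y)\,dy$ since $f$ vanishes on $(-\infty,0]$; this is exactly the first expression for $v$.

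\textbf{Main obstacle.} I expect the delicate point to be the uniqueness half of the argument — passing from "this $v$ solves the problem" to "this is the solution" — since the theorem is phrased as "let $v$ be the solution". One must argue that within the class $C^{1,1}((0,\infty),W_0^{1,1}(0,\infty);[0,\infty))$ the Cauchy problem \eqref{eqh} has at most one solution. The natural route is again Laplace transform in $x$: any solution $w$ in this class has $w(t,0)=0$, so $\partial_t \widetilde{w}(t,\xi) = -\Phi(\xi)\widetilde{w}(t,\xi)$ with $\widetilde{w}(0,\xi)=\widetilde{f}(\xi)$, forcing $\widetilde{w}(t,\xi) = \widetilde{f}(\xi)e^{-t\Phi(\xi)}$, and then Lerch's theorem gives $w=v$. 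The subtlety is justifying the interchange of $\partial_t$ with the Laplace integral and ensuring all functions in play are of exponential order zero so that the transforms and Lerch's theorem legitimately apply — this is where the membership in $\mathcal{M}_0\cap C(0,\infty)$ flagged in the preliminaries does the work, and it should be spelled out carefully rather than waved through.
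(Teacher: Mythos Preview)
Your approach is essentially the paper's: derive the convolution formula via Laplace transforms (you transform in $x$ only; the paper takes a double Laplace in $(t,x)$, which is a cosmetic difference), invoke Lerch's theorem for uniqueness, verify regularity of the candidate, and read off the probabilistic representation from \eqref{inverseRelation}. The $x$-regularity argument (differentiate under the convolution, use $f(0)=0$ to kill the boundary term, apply Young) and the probabilistic step match the paper almost verbatim.

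The one genuine gap is the $t$-regularity. Your argument that $v$ is $C^1$ in $t$ is circular: you say the $C^{1,1}$-in-$t$ statement ``comes from $\partial_t v = -\mathcal{D}_x^\Phi v$ and the bound \eqref{boundCD},'' but the identity $\partial_t v = -\mathcal{D}_x^\Phi v$ is itself obtained via Lerch, which requires knowing \emph{in advance} that $\partial_t v(t,\cdot)$ exists and lies in $\mathcal{M}_0\cap C(0,\infty)$. You flag the interchange of $\partial_t$ with the Laplace integral as the subtle point, but membership in $\mathcal{M}_0$ does not by itself supply the missing derivative. The paper closes this by a direct computation: it writes
\[
\partial_t h(t,x) = a\,h(t,x)\bigl(\ln x + \ln b - \psi_0(at)\bigr)
\]
and then shows, via the limits $\psi_0(z)/\Gamma(z)\to -1$ as $z\to 0$ and $\to 0$ as $z\to\infty$, that $t\mapsto \partial_t h(t,x)$ is continuous and bounded on $(0,\infty)$ for each fixed $x>0$. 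This digamma analysis is the substantive ingredient your sketch lacks; it is exactly where the time-dependent property of the gamma density (its singular behaviour near $x=0$ for $at<1$) forces nontrivial work that cannot be waved through with \eqref{boundCD}.
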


\begin{proof}
	Since $v (t,\cdot) \in W^{1,1}_0(0, \infty)$ for any $t>0$,
	\begin{align*}
	\int_0^\infty e^{-\xi x} \mathcal{D}^\Phi_x v(t,x)\, dx = \Phi(\xi)\, \widetilde{v}(t, \xi), \quad \xi >0
	\end{align*}
	where $\widetilde{v}(t, \xi)= \int_0^\infty e^{-\xi x} v(t,x)\, dx$, $\xi >0$. Let us write $\widetilde{v}(\lambda, x)= \int_0^\infty e^{-\lambda t} v(t,x)\, dt$, $\lambda >0$ and denote by $\widetilde{v}(\lambda, \xi)$ the double Laplace transform. The problem \eqref{eqh} takes the form
	\begin{align*}
	\lambda \tilde{v}(\lambda, \xi) - \tilde{f}(\xi) = - \Phi(\xi) \tilde{v}(\lambda, \xi)
	\end{align*}
	where, with obvious notation, $\widetilde{f}(\xi)$ is the Laplace transform of $f$.  Thus, 
	\begin{align*}
	\tilde{v}(\lambda, \xi)=\tilde{f}(\xi)\frac{1}{\lambda + \Phi(\xi)}
	= & \tilde{f}(\xi) \int_0^\infty e^{-\lambda t} e^{- t \Phi(\xi) }dt\\
	= & \tilde{f}(\xi) \mathbf{E}_0 \left[ \int_0^\infty e^{-\lambda t} e^{-\xi H_t} dt \right]
	\end{align*}
	where in the last step we have used \eqref{LapH}. From this,
	\begin{align*}
	\widetilde{v}(t,\xi) = \widetilde{f}(\xi) \, \mathbf{E}_0[e^{-\xi H_t}]
	\end{align*}
	and
	\begin{align*}
	v(t,x) = \int_0^x f(x-y)\, h(t,y)\, dy.
	\end{align*}
	Uniqueness follows from the Laplace transform technique (Lerch's theorem). Indeed, we are looking for a continuous solution, in particular $v\in C^{1,1}$. Thus, $\psi_t(\xi) = \widetilde{v}(t, \xi)$, $t>0$ and $\psi_x(\lambda) = \widetilde{v} (\lambda, x)$, $x>0$ are two Laplace transforms with unique inverses. We observe that 
\begin{align*}
\| v(t,\cdot)\|_1 \leq \| f\|_1 \| h(t,\cdot)\|_1
\end{align*}	
for any $t>0$, thus $e^{-\xi x} v(t,x)$ is obviously an element of $L^1(dx)$. Furthermore, we have that $\| v(t,\cdot)\|_\infty \leq \| f\|_\infty \| h(t,\cdot)\|_1$ for any $t>0$, thus $v(t,\cdot) \in \mathcal{M}_0 \cap C(0,\infty)$ as required. Indeed, $W^{1,1}_0(0,\infty)$ embeds into $L^\infty (0, \infty)$ and $f$ is essentially bounded. This holds only in the one dimensional case (for more details see section 11.2 of \cite{leoni2017first}). Similar arguments applies w.r. to the time variable by considering $\widetilde{h}(\lambda, x)$. In particular, 
	\begin{align*}
	\| v(\cdot,x) \|_1 \leq \int_0^x |f(x-y)| \kappa(y)\, dy = (J |f|)(x)
	\end{align*}
	where $J$ is a non-local (sometimes termed fractional) integral. This integral can be compared with the solution of \eqref{eqw} below. Here, the function $t \mapsto v(t,x)$ is continuous and integrable for any $x$ and this allows us to proceed with the Laplace technique w.r. to the time variable. We can also observe that $t \to h(t,x)$ is continuous and, since $\Gamma(at)$ is asymptotically faster than $b^{at}x^{at-1}$,
	\begin{align*}
	\lim_{t \to 0} h(t,x)=0 \quad \text{and }\quad \lim_{t\to \infty} h(t,x)=0,
	\end{align*}
	from which we conclude that $h(\cdot,x)$ is bounded for any $x>0$. Proceeding as before, we obtain $\| v(\cdot,x)\|_\infty \leq \| f\|_1 \| h(\cdot,x)\|_\infty$, thus $v(\cdot,x) \in \mathcal{M}_0 \cap C(0,\infty)$ as required.\\
	
	We obtain the probabilistic representation from the fact that
	\begin{align*}
	v(t,x) = \int_0^x f(x-y)\, h(t,y)\, dy = \int_0^\infty f(x-y)\, h(t,y)\, \mathbf{1}_{(y < x )} dy,
	\end{align*}
	that is
	\begin{align*}
	v(t,x) = \mathbf{E}_0[f(x-H_t) \mathbf{1}_{(H_t < x)}].
	\end{align*}
	From \eqref{inverseRelation}, we have that $\mathbf{1}_{(H_t < x)}$ is equivalent to $\mathbf{1}_{(t < L_x)}$ under $\mathbf{E}_0$. \\

	-) We check that $v(t, \cdot) \in C^{1}(0, \infty)$ for any fixed $t>0$. First we observe that $v(t, \cdot) \in C(0,\infty)$ for any $t>0$. Moreover, $\forall\, t>0$, 
	\begin{align}
	\label{convder}
	\frac{d}{dx} v(t,x)&= \frac{d}{dx} \int_{0}^{x} f(x-y) h(t,y) dy \notag \\
	&=f(0) h(t,x) + \int_{0}^{x} f^\prime (x-y) h(t,y) dy \notag\\
	&=\int_{0}^{x} f^\prime (x-y) h(t,y) dy.
	\end{align}
	Since $f \in W_0^{1,1}(0, \infty)$ and $h(t,\cdot)\in L^1(dx)$ $\forall\, t>0$, we conclude that $\frac{d}{dx} v(t,\cdot) \in C(0, \infty)$ for any $t>0$. From the Young's inequality, we notice that the convolution in \eqref{convder} is in $L^1(dx)$ and therefore $v(t,\cdot) \in W_0^{1,1}(0,\infty), \ t>0$.\\

	-) We check that $v(\cdot, x) \in C^{1}(0, \infty)$ for any fixed $x>0$. The derivative
	\begin{align*}
	\frac{d}{dt} v(t,x)= \frac{d}{dt} \int_0^x f(x-y) h(t,y) dy
	\end{align*}
	can be written in terms of
	\begin{align}
	\label{derivatah}
	\frac{d}{dt} h(t,x)&=\frac{d}{dt} \left[\frac{b^{at} x^{at-1}}{\Gamma(at)} e^{-bx}\right]= \frac{a b^{at} x^{at-1}}{\Gamma(at)} \left[\ln x +\ln b-\psi_0(at)\right] e^{-bx} \notag\\
	&=a h(t,x) \left[\ln x +\ln b-\psi_0(at)\right].
	\end{align}
	%Since $h$ is continuous and bounded for any fixed $x>0$, we conclude that \textcolor{blue}{Problema scambio integrale e derivata sopra? $h$ come funzione di $t$ non dovrebbe crearmi problemi, è continua e all'infinito va a zero. Sicuramente la posso limitare con il max. Quando faccio $\partial_t h$ l'unico problema è quando ho $\psi_0$, ma simulando con geogebra sia il limite che va in $0$ che a $\infty$ è finito, quindi posso stimare con il max la derivata e usare teo convergenza dominata.}
	%
	%to be checked
	%Ci serve veramente fare i limiti ? Prima per concludere la continuità di $\frac{d}{dx} v(t,x)$ abbiamo soltanto visto che $f^\prime(x-y),h(t,y) \in L^1(dy)$. Adesso non mi basta vedere che $f(x-y), \frac{d}{dt} h(t,y) \in L^1(dy)$ ??
	%\begin{align*}
	%h(t,x)\ln x=? \ (\text{o -$\infty$ o 0, time dependent}) \quad as\ x\to 0 \quad h(t,x)\ln x=0 \quad as\ x\to \infty
	%\end{align*}
	The idea is to proof that $\frac{d}{dt} h(t,x)$ is continuous and bounded in $t$ for each $x>0$, then it is integrable. First we show that
	\begin{align}
	\label{limhpsi}
	h(t,x) \psi_0(at)=-\frac{e^{-bx}}{x} \quad as\ t \to 0, \quad h(t,x) \psi_0(at)=0 \  \quad as\ t \to \infty \ .
	\end{align}
	Let us recall (\cite[ formula (13.1.5.1)]{krantz})
	\begin{align}
	\label{gammalim}
	\Gamma(z)=\lim_{n \to \infty} \frac{n!n^z}{z(z+1)\cdots (z+n)}.
	\end{align}
	Since $\psi_0(z)=\frac{\Gamma^\prime(z)}{\Gamma(z)}$, then
	\begin{align}
	\label{psilim}
	\psi_0(z)=\lim_{n \to \infty} \log n - \left(\frac{1}{z} + \frac{1}{z+1} + \cdots +\frac{1}{z+n}\right) .
	\end{align}
From \eqref{gammalim} and \eqref{psilim} we write
	\begin{align*}
	\frac{\psi_0(z)}{\Gamma(z)}&=\lim_{n \to \infty} \frac{\log n - \left(\frac{1}{z} + \frac{1}{z+1} + \cdots +\frac{1}{z+n}\right)}{\frac{n!n^z}{z(z+1)\cdots (z+n)}}\\
	&=\lim_{n \to \infty} \frac{z\log n - \left(1 + \frac{z}{z+1} + \cdots +\frac{z}{z+n}\right)}{\frac{n!n^z}{(z+1)\cdots (z+n)}}  .
	\end{align*}
	As $z \to 0$, we obtain that
	\begin{align}
	\label{psidivgammazero}
	\frac{\psi_0(0)}{\Gamma(0)}=\lim_{n \to \infty} \frac{0 - \left(1 + 0 + \cdots +0\right)}{\frac{n!}{n!}}=-1 
	\end{align}
	whereas, due to the asymptotic contribution of $n^z$,
	\begin{align}
	\label{psidivgammainf}
	\lim_{z \to \infty} \frac{\psi_0(z)}{\Gamma(z)}=0.
	\end{align}
	From \eqref{psidivgammazero} and \eqref{psidivgammainf} we respectively obtain
	\begin{align*}
	\lim_{t \to 0} h(t,x) \psi_0(at)=\lim_{t \to 0} { b^{at} x^{at-1}e^{-bx}} \frac{\psi_0(at)}{\Gamma(at)}=-\frac{e^{-bx}}{x} ,
	\end{align*}
	and
	\begin{align*}
	\lim_{t \to \infty} h(t,x) \psi_0(at)=\lim_{t \to \infty} { b^{at} x^{at-1}e^{-bx}} \frac{\psi_0(at)}{\Gamma(at)}=0, 
	\end{align*}
	 which prove  \eqref{limhpsi}.
	Summing up, for each fixed $x>0$, from \eqref{derivatah}, $h(t,x) (\ln x + \ln b)$ is continuous in $t$ and, from \eqref{limhpsi}, we have that $h(t,x) \psi_0(at)$ is continuous $\forall x \ne 0$, hence $\frac{d}{dt} h$ is continuous and bounded, so it is in $ L^1(dt)$. This guarantees that $\frac{d}{dt} v(\cdot,x)$ is $C(0,\infty)$ for any $x>0$ 
	and concludes the proof.
\end{proof}

Notice that $L_x$ in \eqref{SOLv} can be regarded as the stopping time depending on the threshold $x>0$. Indeed, $L$ is an exit time for $H$.\\

Let us consider once again the representation \eqref{repHsing} of $h(t,x)$. The profile of \eqref{repHsing} is given in Figure \ref{FigProfilehstar}. As we can see, $h$ does not seem to be in the domain of \eqref{genH}. Indeed, for $at<1$, $h$ is not continuous at $x=0$ (so it can not be of the Hölder class). In general, for a positive integer $n$, if $n<at\leq n+1$, then $h(t,x)$ is of class $C^{n-1}[0, \infty) \cap C^n(0, \infty)$, thus, $h \notin C^n[0, \infty)$. Such a property is well described in Figure \ref{FigProfilehstar}. 
\begin{figure}[h]
	\centering
	\includegraphics[width=7cm]{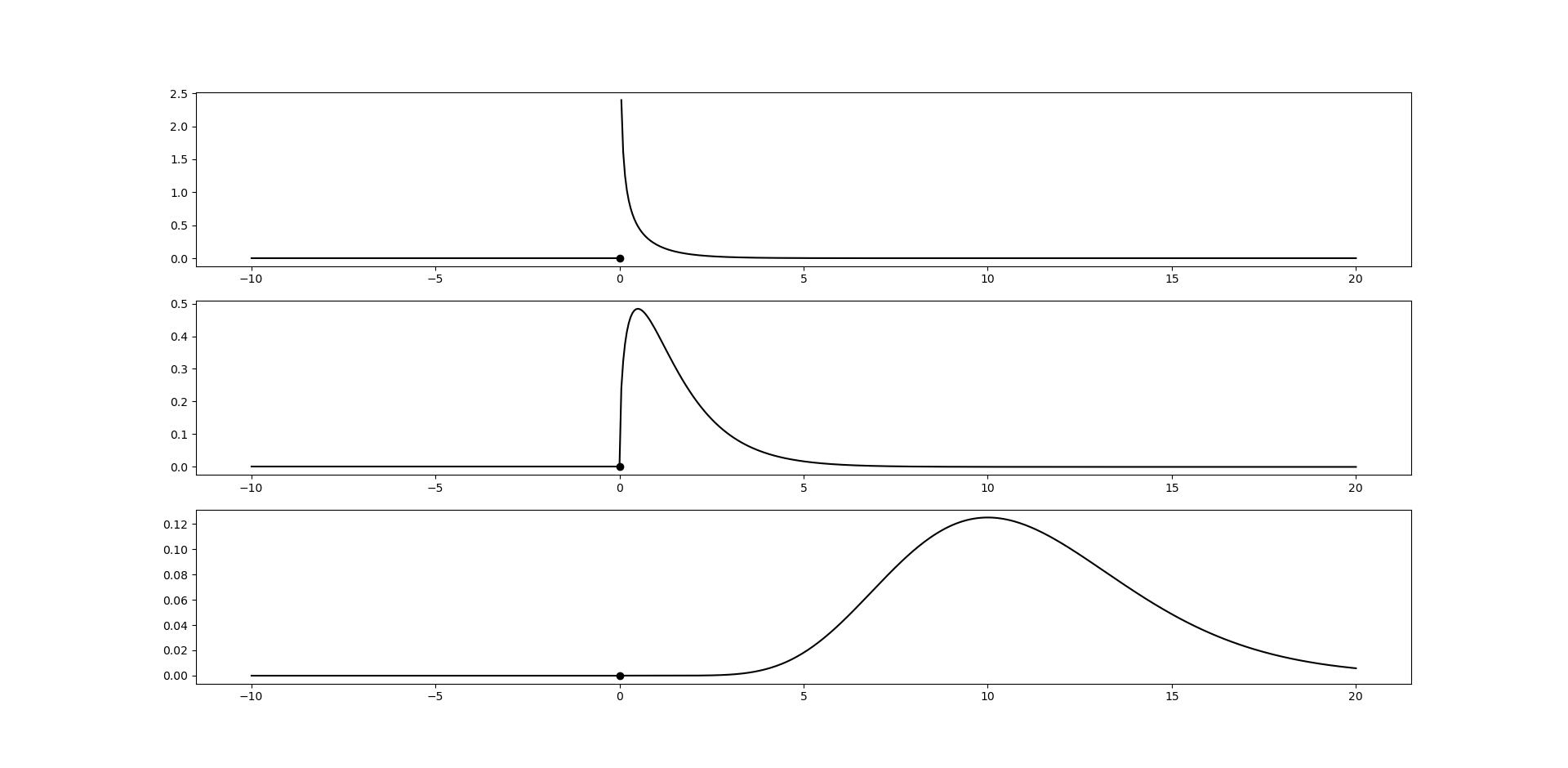}\\
	\caption{The profile of $h(t,x)$ with $a=b=1$ ; above the case $t=0.5$ with discontinuity at $x=0$, in the middle the case $t=1.5$ with a non-differentiable point in $x=0$, below the case $t=11$. }
	\label{FigProfilehstar}
\end{figure}
\\

%
%\\
%{\color{blue} PROBLEMA EQUAZIONE DI $h(t,x)$ DA DISCUTERE \textcolor{red}{???}:\\
%Immediately we have
%\begin{align*}
%	\left\lbrace	
%\begin{array}{ll}
%\displaystyle \frac{\partial}{\partial t} h(t,x) = - \mathcal{D}_x^\Phi h(t,x), &  t>0,\, x \in (0,\infty),\\
%\displaystyle h(0,x)=\delta(x), \\
%\displaystyle h(t,x)=0, & t>0,\, x \in (-\infty, 0]. 
%\end{array}	
%\right.	
%\end{align*}
%because
%\begin{align*}
%-\int_0^\infty e^{-\xi x} \mathcal{D}_x^\Phi h(t,x) \ dx= -\Phi(\xi)\tilde{h}(t,\xi)=-\Phi(\xi) e^{-t \Phi(\xi)}
%\end{align*}
%and 
%\begin{align*}
%\int_0^\infty e^{-\xi x} \frac{\partial}{\partial t} h(t,x) \ dx=\frac{\partial}{\partial t} e^{-t \Phi(\xi)} =-\Phi(\xi) e^{-t \Phi(\xi)}
%\end{align*}
%But in contrast with Theorem \ref{teoH}, now the solution $h(t,x)$ is not $C^{1,1}$ and $\delta(x)$ is not $W^{1,1}_0$. Because we are using Lapalce transforms in some sense the solution could be "piecewise" $C^{1,1}$.
%}\\

For completeness we present an analogue result of Theorem \ref{teoH}. The following result is concerned with the non-local operator in time and therefore, the probabilistic representation of the solution involves the inverse Gamma subordinator.

\begin{theorem}
	Let $r(t,x) \in C^{1,1} ( AC(0,\infty), (0,\infty); [0,\infty) )$ be the solution to
	\begin{align*}
	\left\lbrace	
	\begin{array}{ll}
	\displaystyle \mathfrak{D}_t^\Phi r(t,x) = - \frac{\partial}{\partial x} r(t,x), &  t>0,\, x \in (0,\infty),\\
	\displaystyle r(0,x)=f(x), & f \in C_b(0, \infty),\\
	\displaystyle r(t,x)=0, & t>0,\, x \in (-\infty, 0]. 
	\end{array}	
	\right.	
	\end{align*}
	Then
	\begin{align*}
	r(t,x)=\mathbf{E}_0[f(x-L_t), t<H_x].
	\end{align*}
\end{theorem}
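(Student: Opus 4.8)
The plan is to retrace the proof of Theorem \ref{teoH} with the roles of the space and time variables (and, accordingly, of $H$ and $L$) interchanged, so that the non-local operator now acts in time and the inverse process enters the probabilistic representation. Write $\widetilde{r}(\lambda,x)=\int_0^\infty e^{-\lambda t}r(t,x)\,dt$, $\widetilde{r}(t,\xi)=\int_0^\infty e^{-\xi x}r(t,x)\,dx$ and let $\widetilde{r}(\lambda,\xi)$ denote the double Laplace transform. Since $r(\cdot,x)\in AC(0,\infty)$ and $r(0,x)=f(x)$, the operator \eqref{CapDef} in the time variable transforms, exactly as in the computation preceding \eqref{boundCD}, according to
\begin{align*}
\int_0^\infty e^{-\lambda t}\,\mathfrak{D}^\Phi_t r(t,x)\,dt=\frac{\Phi(\lambda)}{\lambda}\big(\lambda\widetilde{r}(\lambda,x)-f(x)\big)=\Phi(\lambda)\widetilde{r}(\lambda,x)-\frac{\Phi(\lambda)}{\lambda}f(x),\qquad\lambda>0,
\end{align*}
whereas, since $r(t,\cdot)$ is continuous with $r(t,0)=0$ (boundary condition) and $r$ is bounded, an integration by parts gives $\int_0^\infty e^{-\xi x}\partial_x r(t,x)\,dx=\xi\,\widetilde{r}(t,\xi)$ for $\xi>0$. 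Applying both transforms to the equation and solving the resulting algebraic relation yields
\begin{align*}
\Phi(\lambda)\widetilde{r}(\lambda,\xi)-\frac{\Phi(\lambda)}{\lambda}\widetilde{f}(\xi)=-\xi\,\widetilde{r}(\lambda,\xi),\qquad\text{hence}\qquad \widetilde{r}(\lambda,\xi)=\widetilde{f}(\xi)\,\frac{\Phi(\lambda)/\lambda}{\Phi(\lambda)+\xi}.
\end{align*}

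Next I would recognise the second factor as the double Laplace transform of the density $l$ of the inverse gamma subordinator. From \eqref{inverseRelation} and \eqref{laph} one gets, for $y>0$,
\begin{align*}
\int_0^\infty e^{-\lambda t}\,\mathbf{P}_0(L_t>y)\,dt=\int_0^\infty e^{-\lambda t}\,\mathbf{P}_0(H_y<t)\,dt=\frac{1}{\lambda}\int_0^\infty e^{-\lambda s}h(y,s)\,ds=\frac{1}{\lambda}e^{-y\Phi(\lambda)},
\end{align*}
and differentiating $\int_y^\infty\widetilde{l}(\lambda,z)\,dz=\lambda^{-1}e^{-y\Phi(\lambda)}$ in $y$ gives $\widetilde{l}(\lambda,y)=\tfrac{\Phi(\lambda)}{\lambda}e^{-y\Phi(\lambda)}$, so that $\int_0^\infty e^{-\xi y}\widetilde{l}(\lambda,y)\,dy=\tfrac{\Phi(\lambda)/\lambda}{\Phi(\lambda)+\xi}$. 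Thus $\widetilde{r}(\lambda,\xi)=\widetilde{f}(\xi)\int_0^\infty e^{-\xi y}\widetilde{l}(\lambda,y)\,dy$, and inverting first in $\xi$ and then in $\lambda$ (Lerch's theorem) produces
\begin{align*}
r(t,x)=\int_0^x f(x-y)\,l(t,y)\,dy=\mathbf{E}_0\big[f(x-L_t)\,\mathbf{1}_{(L_t<x)}\big];
\end{align*}
finally, as in Theorem \ref{teoH}, \eqref{inverseRelation} lets us replace $\mathbf{1}_{(L_t<x)}$ by $\mathbf{1}_{(t<H_x)}$ under $\mathbf{E}_0$, which is the claimed representation.

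It remains to verify that this candidate belongs to the stated class $C^{1,1}(AC(0,\infty),(0,\infty);[0,\infty))$, which legitimises all the transforms used and, via Lerch's theorem, gives uniqueness (just as in Theorem \ref{teoH}: $\psi_t(\xi)=\widetilde{r}(t,\xi)$ and $\psi_x(\lambda)=\widetilde{r}(\lambda,x)$ are Laplace transforms with unique continuous inverses). The growth is harmless, since $\|r(t,\cdot)\|_\infty\le\|f\|_\infty\|l(t,\cdot)\|_1\le\|f\|_\infty$ and $\|r(\cdot,x)\|_\infty\le\|f\|_\infty$, so $r\in\mathcal{M}_0\cap C(0,\infty)$ in each variable; continuity in $x$ follows from $f\in C_b(0,\infty)$ together with $l(t,\cdot)\in L^1(dx)$, and continuity in $t$ from dominated convergence once $t\mapsto l(t,y)$ is known to be continuous. \emph{The main obstacle} is precisely the regularity of the kernel $l$, which now plays, in the time variable of the inverse process, the role that the \textit{time dependent property} of $h$ played in Theorem \ref{teoH}: one must show that $t\mapsto r(t,x)=\int_0^x f(x-y)l(t,y)\,dy$ is absolutely continuous on $(0,\infty)$ — which I would extract from the explicit formula for $l$ recalled in Section \ref{Sec:GammaSub} (or from $\widetilde{l}(\lambda,y)=\tfrac{\Phi(\lambda)}{\lambda}e^{-y\Phi(\lambda)}$), using that $l(t,y)\to0$ as $t\downarrow0$ for $y>0$ and $l(t,y)\to0$ as $t\to\infty$ — and that $x\mapsto r(t,x)$ is of class $C^1(0,\infty)$; since $f$ is only assumed bounded and continuous, the latter forces the derivative onto the kernel, so that $\partial_x r(t,x)=f(x)\,l(t,0^+)+\int_0^x f(x-y)\,\partial_y l(t,y)\,dy$, where one needs $l(t,0^+)=\overline{\Pi}(t)$ (which follows from \eqref{lapTailLm} and Lerch) and the integrability of $y\mapsto f(x-y)\,\partial_y l(t,y)$ near $y=0$. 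Once these two regularity facts are established, the argument closes exactly as for Theorem \ref{teoH}.
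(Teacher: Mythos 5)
Your argument is correct and follows essentially the same route as the paper's own proof: the paper also reduces the problem to the Laplace identity $\int_0^\infty e^{-\lambda t}\mathfrak{D}^\Phi_t r(t,x)\,dt=(\lambda\widetilde{r}(\lambda,x)-r(0,x))\Phi(\lambda)/\lambda$ coming from \eqref{lapTailLm}, then closes by "Laplace transform techniques" (Lerch) together with the bounds $\|r(t,\cdot)\|_\infty\le\|f\|_\infty$ and an $L^1(dt)$ bound, and reads off the representation through the density $l$ and the relation \eqref{inverseRelation}. You simply carry out in detail the adaptation the paper leaves implicit (including deriving $\widetilde{l}(\lambda,y)=\tfrac{\Phi(\lambda)}{\lambda}e^{-y\Phi(\lambda)}$, which the paper quotes as \eqref{lapL}), so no further comment is needed.
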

\begin{proof}
	The proof follows after some adaptation of the proof of the previous theorem. We have only to add the fact that $\mathfrak{D}^\Phi_t$ is a convolution operator on the set $AC(0, \infty)$ and therefore 
	\begin{align*}
	\int_0^\infty e^{-\lambda t} \mathfrak{D}^\Phi_t r(t,x)\, dt 
	= & \left( \int_0^\infty e^{-\lambda t} r^\prime(t,x)\, dt \right) \left( \int_0^\infty e^{-\lambda t} \overline{\Pi}(t)\, dt \right)\\
	= & \left( \lambda \widetilde{r}(\lambda, x) - r(0,x) \right) \frac{\Phi(\lambda)}{\lambda}
	\end{align*}
	where we used \eqref{lapTailLm}. Then, by using Laplace transforms techniques, we have the claim. Concerning the Laplace machinery, we remark that $\| r(t, \cdot)\|_\infty \leq \| f \|_\infty $ for any $t>0$. Moreover,  for any $x \in (0, \infty)$,
	\begin{align*}
	\| r(\cdot, x) \|_1 \leq \| l(\cdot, x)\|_1 \, \int_0^x |f(y)|dy 
	\end{align*}
	where $\| l(\cdot, x)\|_1 = \lim_{\lambda \to 0^+} \Phi(\lambda)$ given in \eqref{quickCheck}.
\end{proof}

\begin{remark}
	We only recall that $\mathfrak{D}_t^\Phi r = \mathcal{D}_t^\Phi (r-r_0)$ where $\mathcal{D}^\Phi_t$ has been defined in \eqref{RLH}. Thus the previous problem can be written in terms of the Riemann-Liouville (type) derivative in place of the Caputo (type) derivative.
\end{remark}

\begin{remark}
	We remark that the class of the initial datum in Theorem \ref{teoH} is larger than $W^{1,1}_0(0, \infty)$. Let us consider $v(0,x)=\mathbf{1}_{(0, \infty)}(x)$ for instance. Then, we still have a solution. In particular,  
	\begin{align*}
	v(t,x)=\int_{0}^x h(t,y) dy = \mathbf{P}_0(H_t < x).
	\end{align*}
	This comes out also from the probabilistic representation \eqref{SOLv}. Indeed, 
	\begin{align*}
	v(t,x) = \mathbf{P}_0(t < L_x) = \mathbf{P}_0(H_t < x)
	\end{align*}
	where the last identity follows from the relation \eqref{inverseRelation}.
\end{remark}

\section{Main results}

\label{sec:MainResults}

\subsection{Densities and kernels}
As for $\gamma(a,z)$, let us define the incomplete digamma function $\psi_0(a,z)$	
%\begin{align*}
%{\Psi}_0(a,z):=\frac{d}{dz} \ln \gamma(a,z)=\frac{\gamma(a,z)^\prime}{\gamma(a,z)} \ ,
%\end{align*}
through the integral representation 
\begin{align*}
\Psi_0(a,z):=\frac{1}{\gamma(a,z)} \int_0^z y^{a-1} e^{-y} \ln y \ dy \quad a>0, z \geq 0.
\end{align*}
We observe that
\begin{align*}
\lim_{z \to \infty} \Psi_0(a,z)=\psi_0(a) .
\end{align*}

We provide the following representation result.
\begin{theorem}
	For $a,b>0$  the following representation holds
	\begin{align}
	\label{repl}
	l(t,x)
%	&=\frac{a \ \gamma(ax,bt) \ \psi_0(ax) -a\int_0^{bt}y^{ax-1 } e^{-y} \ \ln(y) \ dy}{\Gamma(ax)}, \quad t>0, \; x>0 \notag\\
%	&=\frac{a \ \gamma(ax,bt) \ \psi_0(ax) -a \ \gamma(ax,bt) \psi_0(ax,bt)}{\Gamma(ax)}, \quad t>0, \; x>0.
	=\frac{a \ \gamma(ax,bt)}{\Gamma(ax)} \left(\psi_0(ax) -\Psi_0(ax,bt) \right) \quad t>0, \; x>0 .
	\end{align}
\end{theorem}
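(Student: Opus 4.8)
The plan is to express $l(t,x)$ as the negative $x$–derivative of an explicit one–variable function, exploiting the inverse relation \eqref{inverseRelation} together with the closed form \eqref{repHsing} of $h$, and then to carry out that differentiation by hand.

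Fix $t>0$. By \eqref{inverseRelation}, with the temporal and spatial arguments interchanged, for every $x>0$ one has
\[
\int_x^\infty l(t,y)\,dy=\mathbf{P}_0(L_t>x)=\mathbf{P}_0(H_x<t)=\int_0^t h(x,u)\,du,
\]
the last equality using that $H_x$ has no atom at level $t$ (clear from \eqref{repHsing}). Inserting \eqref{repHsing} and substituting $v=bu$,
\[
\int_0^t h(x,u)\,du=\frac{b^{ax}}{\Gamma(ax)}\int_0^t u^{ax-1}e^{-bu}\,du=\frac{1}{\Gamma(ax)}\int_0^{bt}v^{ax-1}e^{-v}\,dv=\frac{\gamma(ax,bt)}{\Gamma(ax)}
\]
by definition \eqref{defGammaInc}. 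Thus $G_t(x):=\gamma(ax,bt)/\Gamma(ax)$ is $C^1$ on $(0,\infty)$ with $G_t(0^+)=1$ and $G_t(\infty)=0$, and since $l(t,\cdot)$ is a probability density satisfying $\int_x^\infty l(t,y)\,dy=G_t(x)$, its continuous representative is $l(t,x)=-\partial_x G_t(x)$.

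It remains to differentiate. Writing $s=ax$ and $z=bt$, so that $\partial_x=a\,\partial_s$, the quotient rule gives
\[
-\partial_x G_t(x)=-a\left(\frac{\partial_s\gamma(s,z)}{\Gamma(s)}-\frac{\gamma(s,z)\,\Gamma'(s)}{\Gamma(s)^2}\right).
\]
Differentiating \eqref{defGammaInc} under the integral sign yields $\partial_s\gamma(s,z)=\int_0^z y^{s-1}e^{-y}\ln y\,dy=\gamma(s,z)\,\Psi_0(s,z)$ by the definition of the incomplete digamma function, while $\Gamma'(s)=\Gamma(s)\,\psi_0(s)$ by \eqref{defdigamma}. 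Substituting both identities and returning to the variables $x,t$ gives
\[
l(t,x)=-a\,\frac{\gamma(ax,bt)}{\Gamma(ax)}\bigl(\Psi_0(ax,bt)-\psi_0(ax)\bigr),
\]
which is precisely \eqref{repl}.

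There is no deep obstacle here; the steps needing attention are: (i) the legitimacy of exchanging the spatial and temporal roles in \eqref{inverseRelation} and the absence of an atom of $H_x$ at level $t$, both immediate from \eqref{repHsing}; (ii) differentiation under the integral sign in $\partial_s\gamma(s,z)$, justified for $s$ in compacts of $(0,\infty)$ and $z$ in compacts since $y^{s-1}|\ln y|e^{-y}$ admits a uniform integrable bound there; and (iii) the identification of the density with the continuous function $-G_t'$, the standard passage when the distribution function is $C^1$. As a consistency check one can verify directly that $\int_0^\infty l(t,x)\,dx=1$ (using $G_t(0^+)=1$), and that the Laplace transform in $t$ of \eqref{repl} reproduces the known expression for $\int_0^\infty e^{-\lambda t}\,l(t,x)\,dt$.
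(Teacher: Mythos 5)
Your proposal is correct and follows essentially the same route as the paper: identify $\mathbf{P}_0(L_t>x)=\mathbf{P}_0(H_x<t)=\gamma(ax,bt)/\Gamma(ax)$ via the inverse relation and the explicit gamma density, then differentiate in $x$ using $\partial_s\gamma(s,z)=\gamma(s,z)\Psi_0(s,z)$ and $\Gamma'=\Gamma\psi_0$. The paper supplements this with an explicit Laplace-transform verification of \eqref{repl} against \eqref{lapL}, which you mention only as an optional consistency check, but the core argument is identical (and your handling of the chain-rule factor $a$ and the justification of differentiating under the integral are fine).
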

\begin{proof}
For positive $x$, we have, agreeing with \cite{kumarPS}
\begin{align}
\mathbf{P}_0(L_t > x) 
& =\mathbf{P}_0(H_x < t) \notag\\
&=\int_0^t \frac{b^{ax} }{\Gamma(ax)} y^{ax-1} e^{-by} dy \notag \\
&=\int_0^{bt} \frac{b^{ax} }{\Gamma(ax)} \left(\frac{z}{b}\right)^{ax-1} e^{-z} \frac{dz}{b} \notag \\
&=\int_0^{bt} \frac{1}{\Gamma(ax)} z^{ax-1} e^{-z} dz \notag \\
\label{ripLt}
&=\frac{\gamma(ax,bt)}{\Gamma(ax)}
\end{align}
where the incomplete gamma function \eqref{defGammaInc} is involved. Thus, from the relations 
\begin{align*}
\frac{d}{dx} \gamma(ax, bt) = \frac{d}{dx} \int_0^{bt}y^{ax-1 } e^{-y}dy=\int_0^{bt}y^{ax-1 } e^{-y} \ \ln(y) \ dy,
\end{align*}
and from (formula 6.3.1 in \cite{handbook})
\begin{align}
\label{derGammaFunc}
\frac{d}{dx} \Gamma(x) = \psi_0(x) \Gamma(x),
\end{align}
we get
\begin{align*}
l(t,x)&=-\frac{d}{dx}\mathbf{P}_0(L_t>x)=-\frac{d}{dx}\frac{\gamma(ax,bt)}{\Gamma(ax)}\\
&=\frac{a \ \gamma(ax,bt) \ \Gamma(ax) \ \psi_0(ax)-\Gamma(ax)  \frac{d}{dx} \int_0^{bt}y^{ax-1 } e^{-y}dy} {\Gamma(ax)^2}\\
&=\frac{a \ \gamma(ax,bt) \ \psi_0(ax) -a\int_0^{bt}y^{ax-1 } e^{-y} \ \ln(y) \ dy}{\Gamma(ax)}.
\end{align*}
From \eqref{derGammaFunc} and \eqref{defdigamma}, we write
\begin{align*}
\int_0^{bt}y^{ax-1 } e^{-y} \ \ln(y) \ dy = \Psi_0(ax,bt) \gamma(ax,bt)
\end{align*}
and therefore, the claim follows.\\

%Since {\color{red}  se dico $\mathbf{P}_0(L_t < x)=\mathbf{P}_0(H_x > t)$ ha senso parlare di $x<0$? Perché ora per $H$ $x$ è il tempo che è un parametro positivo}
%\begin{align*}
%\mathbf{P}_0(H_x > t) = 1- \frac{\gamma(ax, bt)}{\Gamma(ax)}, \quad t>0,\; x>0
%\end{align*}
%from \eqref{defGammaInc} and the relation \eqref{inverseRelation} we immediately see that, $\forall\, t>0$,\\
%{\color{red}{la $\gamma(ax,bt)$ è definita solo con il primo termine positivo, $ax>0$, non è proprio definita per $ax<0$ e quindi per $x<0$}}
%\begin{align*}
%\mathbf{P}_0(L_t < x) = 0, \quad \textrm{if} \quad x<0.
%\end{align*}
%From this, since $l^*\geq 0$,
%\begin{align*}
%\int_{-\infty}^x l^*(t,y)\, dy =0 \quad \forall\, x<0. 
%\end{align*}
We now proceed by considering the Laplace technique. We observe that (\cite[formula (3.13)]{triangarray})
	\begin{align}
	\label{lapL}
	\int_0^\infty e^{-\lambda t} l(t,x) dt= \frac{\Phi(\lambda)}{\lambda} e^{-x \Phi(\lambda)}, \quad \lambda > 0
	\end{align}
is verified by the representation \eqref{repl} as expected. Since
\begin{align*}
\int_0^\infty e^{-\lambda t} \gamma(ax,bt)dt&=\int_0^\infty e^{-\lambda t} \int_0^{bt} y^{ax-1} e^{-y}   dy\ dt\\
%&=int_0^\infty e^{-\lambda t} \int_0^{t} (bz)^{ax-1} e^{-bz} \ln bz dy\\
&=\frac{1}{\lambda}  \int_0^\infty e^{-\lambda \frac{y}{b}} e^{-y} y^{ax-1}  dy \\
&=\frac{1}{\lambda} \Gamma(ax) \left(1+\frac{\lambda}{b}\right)^{-ax}\\
&=\frac{\Gamma(ax)}{\lambda} e^{-x \Phi(\lambda)} 
\end{align*}
we write
\begin{align*}
\int_0^\infty e^{-\lambda t} a \frac{\gamma(ax,bt)}{\Gamma(ax)} \psi_0(ax) dt=a \psi_0(ax) \frac{1}{\lambda} e^{-x \Phi(\lambda)}.
\end{align*}
Moreover, 
\begin{align*}
\int_0^\infty e^{-\lambda t} \int_0^{bt} y^{ax-1} e^{-y} \ln y \ dy \ dt&= \frac{1}{\lambda} \int_{0}^{\infty} e^{-\frac{\lambda}{b} y} y^{ax-1} e^{-y} \ln y \ dy\\
&=\frac{1}{\lambda} \frac{1}{\left(1+\frac{\lambda}{b}\right)^{ax}} \Gamma(ax) \left[\psi_0(ax) -\ln\left(1+\frac{\lambda}{b} \right) \right]\\
&=\frac{1}{\lambda} \Gamma(ax) \left[\psi_0(ax) -\ln\left(1+\frac{\lambda}{b} \right) \right] e^{-x \Phi(\lambda)},
\end{align*}
where the second-last equality comes from  (\cite[formula 4.352]{table})
\begin{align*}
\int_0^\infty x^{c-1} e^{-dx} \ln x dx =\frac{1}{d^c} \Gamma(c)[\psi_0(c) - \ln d] \quad c,d>0,
\end{align*}
and therefore
\begin{align*}
\int_0^\infty e^{-\lambda t} \frac{a}{\Gamma(ax)} \int_0^{bt} y^{ax-1} e^{-y} \ln y \ dy \ dt &=\frac{a}{\lambda} \left[\psi_0(ax) -\ln\left(1+\frac{\lambda}{b} \right) \right] e^{-x \Phi(\lambda)} \\
&=\frac{1}{\lambda} \left[a\psi_0(ax) -\Phi(\lambda) \right] e^{-x \Phi(\lambda)}.
\end{align*}
By collecting all together the previous Laplace transforms, 
\begin{align*}
\int_0^\infty e^{-\lambda t} l(t,x) dt
= & a \psi_0(ax) \frac{1}{\lambda} e^{-x \Phi(\lambda)} - \frac{1}{\lambda} \left[a\psi_0(ax) -\Phi(\lambda) \right] e^{-x \Phi(\lambda)}\\
=& \frac{\Phi(\lambda)}{\lambda} e^{-x \Phi(\lambda)}, \quad \lambda>0.
\end{align*}
This concludes the proof.
\end{proof}

%COMMENTO QUESTA PARTE PERCHE' NON PASSA PER LA NOSTRA RAPPRESENTAZIONE DI l\\
%We notice that $l(t,x) \to \delta(x)$ as $t \to 0$ in the sense of distribution. Indeed, let $g$ be a real Schwartz function, we have that
%\begin{align*}
%\lim_{t \to 0} \int_0^\infty l(t,x) g(x) dx&=\lim_{\lambda \to \infty} \lambda \int_0^\infty e^{-\lambda t} \left(\int_0^\infty l(t,x) g(x) dx\right) dt\\
%&=\lim_{\lambda \to \infty} \lambda \int_0^\infty \frac{\Phi(\lambda)}{\lambda} e^{-x \Phi(\lambda)} g(x) dx\\
%&=\lim_{\lambda \to \infty} \Phi(\lambda) \left[-\frac{1}{\Phi(\lambda)} e^{-x \Phi(\lambda)} g(x) \big\vert_0^\infty + \frac{1}{\Phi(\lambda)} \int_0^\infty e^{-x \Phi(\lambda)} g'(x) dx\right] \\
%&= g(0) + \lim_{\lambda \to \infty} \widetilde{g'(\Phi(\lambda) ) }
%\end{align*}
%where with $\widetilde{g'(\Phi(\lambda) ) }$ we indicate Laplace transform of $g'$ calculated in $\Phi(\lambda)$. But
%\begin{align*}
%\lim_{\lambda \to \infty} \widetilde{g'(\Phi(\lambda) ) } = 0
%\end{align*}
%because $g'$ is still a Schwartz function and $\Phi(\lambda)=a\ln(1+\frac{\lambda}{b}) \to \infty$ as $\lambda \to \infty$.
%
%

From the $\lambda$-potential
\begin{align*}
\mathbf{E}_0 \left[ \int_0^\infty e^{-\lambda H_t}\, dt \right] = \frac{1}{a \ln \left( 1+ \frac{\lambda}{b} \right)} = \int_0^\infty e^{-\lambda x} \kappa(x)\, dx ,\quad \lambda >0
\end{align*}
we can write the potential density
\begin{align*}
\kappa(x) = \int_0^\infty h(t,x)\, dt.
\end{align*}
On the other hand 
\begin{align*}
\lim_{x \downarrow 0} \int_0^\infty e^{-\lambda t} l(t,x)\, dt = \frac{1}{\lambda} a \ln \left( 1 + \frac{\lambda}{b} \right) 
\end{align*}
that is, $l(t,0)= \overline{\Pi}(t)$ as introduced in \eqref{lapTailLm}. Let us write 
$$\ell(t)=l(t,0).$$ 
We have that
\begin{align}
\label{convSonine}
\int_0^1 \kappa(z)\, \ell(1-z)\, dz =1
\end{align}
and therefore, $\kappa$ and $\ell$ are associated Sonine kernels (see \cite{sonine}, \cite{samko}). \\
Next we rewrite $\kappa$ and $\ell$ using some  information on the Gamma subordinator.

Let us first introduce the exponential integral
\begin{align*}
E_1(x):=\int_{x}^\infty \frac{e^{-z}}{z} dz, \quad x>0.
\end{align*}

We now present the following result.

\begin{theorem}
\label{thm:Sonine}
The associated Sonine kernels $\kappa$ and $\ell$ are given by
\begin{align}
\label{SonineKernels}
\kappa(x) = \frac{b}{a} e^{-bx} \nu(bx, -1), \quad \ell(x) = a E_1(bx), \quad x>0.
\end{align}
\end{theorem}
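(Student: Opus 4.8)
The plan is to handle the two kernels separately; in each case the statement reduces to a Laplace-transform identity already recorded above, together with Lerch's theorem for uniqueness.

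For $\ell$ I would start from the identification $\ell(t)=l(t,0)=\overline{\Pi}(t)$ established just before the statement. By the L\'evy representation in \eqref{symbGamma} the gamma subordinator has L\'evy measure $\Pi(dz)=a\,e^{-bz}z^{-1}\,dz$ on $(0,\infty)$, hence $\ell(t)=\overline{\Pi}(t)=a\int_t^\infty e^{-bz}z^{-1}\,dz$, and the substitution $u=bz$ gives $\ell(t)=a\int_{bt}^\infty e^{-u}u^{-1}\,du=a\,E_1(bt)$, which is the claimed expression. As a consistency check one recovers $\int_0^\infty e^{-\lambda t}\ell(t)\,dt=\Phi(\lambda)/\lambda$ by a Frullani integral, in agreement with \eqref{lapTailLm}.

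For $\kappa$ I would use the $\lambda$-potential $\int_0^\infty e^{-\lambda x}\kappa(x)\,dx=1/\Phi(\lambda)=\big(a\ln(1+\lambda/b)\big)^{-1}$, valid for $\lambda>0$, and verify that $g(x):=\tfrac{b}{a}e^{-bx}\nu(bx,-1)$ has the same transform. Substituting $y=bx$ yields $\int_0^\infty e^{-\lambda x}g(x)\,dx=\tfrac1a\int_0^\infty e^{-(1+\lambda/b)y}\nu(y,-1)\,dy$, and applying \eqref{lapnu} with $\alpha=-1$ and Laplace variable $s=1+\lambda/b>1$ gives $\int_0^\infty e^{-sy}\nu(y,-1)\,dy=1/(s^{0}\ln s)=1/\ln s$; hence $\int_0^\infty e^{-\lambda x}g(x)\,dx=\big(a\ln(1+\lambda/b)\big)^{-1}=1/\Phi(\lambda)$. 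Since both $\kappa$ and $g$ belong to $\mathcal{M}_0\cap C(0,\infty)$ -- for $g$ one notes that the integral \eqref{defmu} defining $\nu(bx,-1)=\mu(bx,0,-1)$ converges for every $x>0$, because $1/\Gamma(y)$ vanishes at $y=0$ and decays super-exponentially as $y\to\infty$ -- Lerch's theorem gives $\kappa=g$. A more direct route, which I would at least mention, avoids the transform altogether: integrating \eqref{repHsing} (Tonelli) gives $\kappa(x)=\int_0^\infty h(t,x)\,dt=e^{-bx}x^{-1}\int_0^\infty (bx)^{at}\Gamma(at)^{-1}\,dt$, and the substitution $s=at$ rewrites the last integral as $\tfrac1a\int_0^\infty (bx)^{s}\Gamma(s)^{-1}\,ds=\tfrac{bx}{a}\,\nu(bx,-1)$ by \eqref{defmu}--\eqref{defnu}, so that $\kappa(x)=\tfrac{b}{a}e^{-bx}\nu(bx,-1)$ at once.

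The result is moreover internally consistent with \eqref{convSonine}: the product of the two Laplace transforms is $\tfrac1{\Phi(\lambda)}\cdot\tfrac{\Phi(\lambda)}{\lambda}=\tfrac1\lambda$, the transform of the constant $1$, which is exactly the Sonine condition. I do not expect a genuine obstacle here; the only points that deserve a line of justification are that \eqref{lapnu} is applied inside its stated domain of validity (which holds since $1+\lambda/b>1$ for all $\lambda>0$) and that the interchange of integrations in the direct computation of $\kappa$ is legitimate, which follows from nonnegativity via Tonelli.
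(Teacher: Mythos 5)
Your proof is correct, but it follows a different route from the one the paper uses for this theorem. For $\ell$ you and the paper do the same thing: identify $\ell=\overline{\Pi}$ and compute $\overline{\Pi}(t)=aE_1(bt)$ by the substitution $u=bz$. For $\kappa$, however, the paper's proof never touches \eqref{lapnu}: it first establishes the convolution representation $l(t,x)=\int_0^t h(x,t-s)\,\overline{\Pi}(s)\,ds$ by matching Laplace transforms with \eqref{lapL}, then integrates this identity in $x$, uses the normalization $\int_0^\infty l(t,x)\,dx=1$, and recognizes the inner $x$-integral as $\tfrac{b}{a}\nu(b(t-s),-1)$ via \eqref{defnu}; this simultaneously yields the explicit representation \eqref{repSonineL} for $l$ and exhibits the Sonine identity with the claimed kernels. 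Your two arguments — verifying that $\tfrac{b}{a}e^{-bx}\nu(bx,-1)$ has Laplace transform $1/\Phi(\lambda)$ via \eqref{lapnu} with $s=1+\lambda/b>1$, or computing $\int_0^\infty h(t,x)\,dt$ directly by the substitution $s=at$ and the defining integral \eqref{defmu}--\eqref{defnu} — are shorter and bypass the convolution identity altogether; in fact your direct computation is essentially the ``alternative proof'' the paper itself gives later, there obtained as the limit $\lambda\downarrow 0$ of $\widetilde{h}(\lambda,x)$ from Theorem \ref{lemmahtilde}. What the paper's longer route buys is the intermediate formula \eqref{convhPi}--\eqref{repSonineL} for $l(t,x)$, which is reused elsewhere; what yours buys is economy and a transparent use of the transform pair \eqref{lapnu}. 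One small imprecision: you assert $\kappa$ and $g$ lie in $\mathcal{M}_0\cap C(0,\infty)$, but by \eqref{nu0} one has $\nu(bx,-1)\sim \bigl(bx(\ln bx)^2\bigr)^{-1}$ as $x\downarrow 0$, so these functions are unbounded at the origin and do not literally satisfy the bound defining $\mathcal{M}_0$; this does not damage the argument, since both functions are continuous on $(0,\infty)$, locally integrable, and of exponential order at infinity, which is all that the Lerch uniqueness step actually needs — but the membership claim should be phrased accordingly (or avoided entirely by relying on your direct computation of $\int_0^\infty h(t,x)\,dt$, which requires no uniqueness argument at all).
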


\begin{proof}
Observe that
\begin{align}
\label{tailE1}
\overline{\Pi}(t)=a\int_t^\infty \frac{e^{-by}}{y} dy = a\int_{bt}^\infty \frac{e^{-z}}{z} dz = a E_1(bt).
\end{align}
For $t>0$, $x>0$,
	\begin{align}
	l(t,x) 
	= & \int_0^t h(x,t-s) \overline{\Pi}(s) ds \label{convhPi} \\
	= & \int_0^t \frac{b^{ax}}{\Gamma(ax)} (t-s)^{ax-1} e^{-b(t-s)} a E_1(bs) \ ds \label{repSonineL}
	\end{align}
where the formula \eqref{repSonineL} can be easily obtained from the definition \eqref{tailE1} of $E_1$. Then we focus on \eqref{convhPi}. From \eqref{LapH} we have that
\begin{align*}
	\int_0^\infty e^{-\lambda s} \frac{b^{ax}}{\Gamma(ax)} s^{ax-1} e^{-bs } ds=%&=\frac{b^{ax}}{\Gamma(ax)} \int_0^\infty e^{-s (\lambda + b)} s^{ax-1} ds\\
%	&=\frac{b^{ax}}{\Gamma(ax)} \Gamma(ax) (\lambda + b)^{-ax}=
e^{-ax \ln\left(1+\frac{\lambda}{b}\right)}.
	\end{align*}
From this, by taking into account \eqref{lapTailLm} we get the Laplace transform 
	\begin{align*}
	\int_0^\infty e^{-\lambda t} \int_0^t h(x,t-s) \overline{\Pi}(s) ds\, dt =\frac{\Phi(\lambda)}{\lambda} e^{-x \Phi(\lambda)}, \quad \lambda>0
	\end{align*}
which coincides with \eqref{lapL}. This proves that \eqref{convhPi} holds true.	\\
	
	Now we prove \eqref{SonineKernels}. Since
	\begin{align*}
	\int_0^\infty l(t,x) dx=1,
	\end{align*}
from the previous result we have that
	\begin{align*}
	1
	= & \int_0^\infty \left(\int_0^t h(x,t-s) \overline{\Pi}(s) ds\right) dx \\
	= & \int_0^t \overline{\Pi}(s) e^{-b(t-s)} \left(\int_0^\infty \frac{b^{ax}}{\Gamma(ax)} (t-s)^{ax-1} dx\right) ds \\
	= & \int_0^t \overline{\Pi}(s) e^{-b(t-s)} \frac{b}{a} \left(\int_0^\infty \frac{[b(t-s)]^{y-1}}{\Gamma(y)} dy\right) ds.	
\end{align*}
From \eqref{defnu} we get that
\begin{align*}
	1 = & \int_0^t aE_1(bs) e^{-b(t-s)} \frac{b}{a} \nu(b(t-s), -1) ds.
	\end{align*}
	This concludes the proof.
\end{proof}

We now move to the elliptic problem associated with an Abel (type) equation. Let $f \in C_b(0, \infty)$ be such that
\begin{align*}
f(y) \kappa(x-y) \in L^1(0,x),\, \forall x.
\end{align*}
The unique continuous solution $w$ to
\begin{equation}
\label{eqw}
\left\lbrace
\begin{array}{ll}
\displaystyle f(x)=\mathcal{D}_x^\Phi w(x) & x \in (0,\infty),\\
\displaystyle w(x)=0 &  x \in (-\infty, 0].
\end{array}
\right .
\end{equation}
is given by
\begin{align*}
w(x) = \int_0^x f(x-y) \kappa(y)\, dy = \mathbf{E}_0\left[ \int_0^{L_x} f(x-H_t)\, dt \right]
\end{align*}
We immediately see that 
\begin{align*}
w(x) = \int_0^\infty v(t,x)\, dt
\end{align*}
where $v$ is the solution to \eqref{eqh}. Since 
		\begin{align*}
		\lim_{t\to \infty} v(t,x)=0
		\end{align*}
and $v(0,x)=f(x)$, the problem \eqref{eqh} takes the form \eqref{eqw} just integrating w.r. to the time variable. Concerning the solution $w$ we have that, by definition, $\kappa(x)=\int_0^\infty h(t,x) dt$. The probabilistic representation can be directly obtained from \eqref{SOLv}.
%The solution is $C^\infty(0,\infty)$ because it is a convolution between $f \in C_b(0,\infty)$ and $\kappa$ that is analytic.

%\textcolor{red}{I think the solution is $C^\infty(0,\infty)$, indeed $\kappa$ is an analytic function (product of two analytic functions), then if we want to calculate $k-$derivative of $w$ 
%\begin{align*}
%\partial^k w(x)=f \star (\partial^k \kappa)(x)
%\end{align*}
%}
		
%{\color{blue} TENTATIVO DI RISCRIVERE $f$ ( Kolokoltsov ??)\\
%From the equivalence between R-L (type) operator and Caputo (type), we have
%\begin{align*}
%\mathcal{D}^\Phi \varphi= \varphi^\prime \star \ell \ .
%\end{align*}
%Let us define the integral operator
%\begin{align*}
%\mathcal{I}^\Phi \varphi= \varphi \star \kappa \ .
%\end{align*}
%From the last Theorem we have
%\begin{align*}
%\mathcal{I}^\Phi f = w \ .
%\end{align*}
%Then, using that $\kappa$ and $\ell$ are associated Sonine kernels
%\begin{align*}
%\mathcal{D}^\Phi \mathcal{I}^\Phi \varphi&= (\varphi \star \kappa)^\prime \star \ell\\
%&=\varphi^\prime \star \kappa \star \ell =\varphi^\prime
%\end{align*}
%But $f(0)=0$, so
%\begin{align*}
%f(x)=\int_0^x \mathcal{D}^\Phi_y w(y) dy
%\end{align*}
%}

\subsection{Real moments}

We provide here a formula for the real $q$-th moment of the inverse process $L_t$. In the literature only the first two moments are known. The problem arises on the computation of the inverse Laplace transform
\begin{align}
\int_0^\infty e^{-\lambda t} \mathbf{E}[(L_t)^q]\, dt 
= &  \frac{\Phi(\lambda)}{\lambda} \int_0^\infty x^q e^{-x \Phi(\lambda)} dx\notag \\ 
= & \frac{\Gamma(q+1)}{\lambda\, (\Phi(\lambda))^q}, \quad \lambda>0, \; q>0.
\label{lapMomL}
\end{align}
In the last evaluation we have taken into account \eqref{lapL}. As we can immediately see the formulas \eqref{lapMomL} and \eqref{lapmu} are evidently related. This inspires the forthcoming discussion.

Further on the following notation $f \sim g$ stands for $f(z)/g(z) \to c$ as $z\to \infty$ where $c$ is a positive constant.

\begin{theorem}
	\label{teomomenti}
Let $q \in [1,\infty)$, $t>0$.
	\begin{itemize}
		\item[(i)] The moments of the inverse process $L$ are given by
	\begin{align}
	\label{momenti2}
	\mathbf{E}_0[L_t^q]&=\frac{\Gamma(q+1)}{a^q} \sum_{n=0}^\infty e^{-bt} \mu(bt,q-1,n)  \\
	\label{momenti_int}
	&=\frac{b\ \Gamma(q+1)}{a^q} \int_0^t   e^{-bs} \mu(bs,q-1,-1) \ ds .
	\end{align} 
	\item[(ii)] Moreover, as $t\to \infty$,
	\begin{align}
	\label{asintmom}
			\mathbf{E}_0[L_t^q] \sim \left(\frac{b \ t}{a} \right)^q .
	\end{align}
\end{itemize}
\end{theorem}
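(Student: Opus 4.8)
The plan is to recover $\mathbf{E}_0[L_t^q]$ from its Laplace transform \eqref{lapMomL}, namely $\lambda\mapsto\Gamma(q+1)/\big(\lambda(\Phi(\lambda))^q\big)$ with $\Phi(\lambda)=a\ln(1+\lambda/b)$, by matching it against the Laplace transforms \eqref{lapmu} of the Volterra functions $\mu$, after rescaling the argument and applying an exponential tilt. Concretely, substituting $x\mapsto bx$ and $\lambda\mapsto\lambda+b$ in \eqref{lapmu} gives, for every $\alpha$ and every $\lambda>0$,
\begin{align*}
\int_0^\infty e^{-\lambda t}\,e^{-bt}\mu(bt,q-1,\alpha)\,dt=\frac{b^{\alpha}}{(\lambda+b)^{\alpha+1}\big(\ln(1+\lambda/b)\big)^{q}},
\end{align*}
the factor $e^{-bt}$ being exactly what moves the abscissa of convergence from $\lambda>b$ down to $\lambda>0$.

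For part (i) I would use this identity twice. Taking $\alpha=-1$ and then integrating in time (i.e. dividing the transform by $\lambda$) yields
\begin{align*}
\int_0^\infty e^{-\lambda t}\Big(b\!\int_0^t e^{-bs}\mu(bs,q-1,-1)\,ds\Big)dt=\frac{1}{\lambda\big(\ln(1+\lambda/b)\big)^{q}},
\end{align*}
and multiplying by $\Gamma(q+1)/a^{q}$ reproduces the right-hand side of \eqref{lapMomL}; Lerch's uniqueness theorem (recalling that $t\mapsto\mathbf{E}_0[L_t^q]$ is continuous, since $L$ has continuous nondecreasing paths, so one passes to the limit inside the expectation by monotone/dominated convergence) gives \eqref{momenti_int}. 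For the series \eqref{momenti2} I would instead take $\alpha=n\in\{0,1,2,\dots\}$, sum over $n$, and use the geometric identity $\sum_{n\ge0}b^{n}(\lambda+b)^{-n-1}=\lambda^{-1}$; since $\mu(x,q-1,n)\ge0$ for $q>0$, Tonelli permits exchanging sum and integral, so $\sum_{n\ge0}e^{-bt}\mu(bt,q-1,n)$ also has Laplace transform $\lambda^{-1}\big(\ln(1+\lambda/b)\big)^{-q}$, and another appeal to Lerch gives \eqref{momenti2}. In particular the two representations in (i) coincide because they share a Laplace transform.

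For part (ii) I would bypass the series and argue by a Tauberian theorem. Since $L$ has nondecreasing paths, $t\mapsto\mathbf{E}_0[L_t^q]$ is nondecreasing; and from \eqref{lapMomL} together with $\ln(1+\lambda/b)\sim\lambda/b$ as $\lambda\downarrow0$,
\begin{align*}
\int_0^\infty e^{-\lambda t}\,\mathbf{E}_0[L_t^q]\,dt=\frac{\Gamma(q+1)}{\lambda\big(a\ln(1+\lambda/b)\big)^{q}}\sim\frac{\Gamma(q+1)\,b^{q}}{a^{q}}\,\lambda^{-(q+1)}\qquad(\lambda\downarrow0).
\end{align*}
The monotone-density form of Karamata's Tauberian theorem then delivers $\mathbf{E}_0[L_t^q]\sim\tfrac{1}{\Gamma(q+1)}\cdot\tfrac{\Gamma(q+1)b^{q}}{a^{q}}\,t^{q}=(bt/a)^q$, which is \eqref{asintmom}.

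The genuinely delicate points — as opposed to the routine scaling and geometric-series manipulations — are: checking that all the Laplace transforms involved converge on a common half-line $\lambda>0$ and that the functions being inverted lie in a class where Lerch's theorem applies; noting that $s\mapsto e^{-bs}\mu(bs,q-1,-1)$, though singular at $s=0$, is integrable near $0$ precisely because $q-1>-1$; and verifying the hypotheses (nondecreasing, regularly varying transform) required for the Tauberian step. Everything else is bookkeeping around the identification of $\lambda^{-1}\big(\ln(1+\lambda/b)\big)^{-q}$ with the (series of) tilted Volterra transforms.
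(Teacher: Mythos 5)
Your proof is correct. For part (i) it follows essentially the paper's own route: both arguments identify the Laplace transform \eqref{lapMomL} with tilted/rescaled transforms of $\mu$ via \eqref{lapmu}, and both obtain \eqref{momenti2} by summing the geometric series $\sum_n b^n(\lambda+b)^{-n-1}=\lambda^{-1}$. The one real difference in (i) is how \eqref{momenti_int} is reached: you take $\alpha=-1$ in the tilted transform and divide by $\lambda$, invoking Lerch once more, whereas the paper derives the integral form from the series by repeated integration by parts using $\tfrac{d^n}{dx^n}\mu(x,\beta,\alpha)=\mu(x,\beta,\alpha-n)$ (formula \eqref{derivatemu}) and then checks integrability of $e^{-bs}\mu(bs,q-1,-1)$ near $s=0$ via the small-argument asymptotics \eqref{nu0}--\eqref{mu0}; your transform-side derivation is arguably cleaner, but you still owe the same local-integrability check (your ``because $q-1>-1$'' should really be: the singularity is of order $s^{-1}(\ln(1/s))^{-(q+1)}$ with $q+1>1$, which is what \eqref{mu0} encodes), and strictly speaking Lerch only gives equality off isolated discontinuities, so one should note, as you partly do, that both sides of \eqref{momenti2}--\eqref{momenti_int} and $t\mapsto\mathbf{E}_0[L_t^q]$ are continuous — a point the paper handles at a comparable level of detail. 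For part (ii) you take a genuinely different route from the paper's: your monotonicity-plus-Karamata Tauberian argument is precisely the proof the paper recalls from the earlier literature (there stated for $b=1$, and your extension to general $b$ is immediate), while the paper's stated contribution is an alternative, Tauberian-free proof that rewrites \eqref{momenti_int} as $\tfrac{qb^q}{a^q}\int_0^\infty y^{q-1}\gamma(by,bt)/\Gamma(by)\,dy$ and lets $\gamma(by,bt)/\Gamma(by)\to1$ as $t\to\infty$. Your version buys brevity at the price of citing the Tauberian theorem; the paper's buys a self-contained argument that showcases the new representation. Both are valid proofs of \eqref{asintmom}.
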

\begin{remark}
The result in point $ii)$ has been introduced in \cite{kumarPS} for $b=1$. We confirm such a result in the general case  $b \ne 1$. In particular, we obtain \eqref{asintmom} by considering an alternative proof via our arguments.
For this reason we decided to state \eqref{asintmom} in Theorem \ref{teomomenti}.
\end{remark}
\begin{proof}
We present the proof of Theorem \ref{teomomenti} as follows:
	\begin{itemize}
		\item[(i)]
	If $H_t$ is a subordinator with symbol $\Phi (\lambda)$ and $L_t$ is an inverse subordinator to $H_t$, then by  proceeding as in \cite{veillette}, the time-Laplace transform of the moments $\mathbf{E}_0[L_t^q]$ can be easily given as
	\begin{align*}
	\mathbf{E}_0\left[\int_0^\infty e^{-\lambda t} (L_t)^q \ dt \right]&=\int_0^\infty e^{-\lambda t} \int_0^\infty x^q l(t,x) dx \ dt\\
	&=\int_0^\infty x^q \frac{\Phi(\lambda)}{\lambda} e^{-x \Phi(\lambda)} dx \\
	&=\frac{\Phi(\lambda)}{\lambda} \frac{\Gamma(q+1)}{\Phi(\lambda)^{q+1}}\\
	&=\frac{\Gamma(q+1)}{\lambda \Phi(\lambda)^q}
	=\frac{\Gamma(q+1)}{a^q \lambda \left[\ln(1+\frac{\lambda}{b})\right]^q}, \quad \lambda >0 .
	\end{align*}
	On the other hand, 
	\begin{align*}
	& \int_0^\infty e^{-\lambda t} \left[\frac{\Gamma(q+1)}{a^q} \sum_{n=0}^\infty e^{-bt} \mu(bt,q-1,n)\right] dt\\
	&= \frac{\Gamma(q+1)}{a^q} \sum_{n=0}^\infty \int_0^\infty e^{-(\lambda+b) t} \mu(bt,q-1,n) dt \\
	&=\frac{\Gamma(q+1)}{a^q} \sum_{n=0}^\infty\frac{1}{b} \int_0^\infty e^{-\frac{\lambda+b}{b} t} \mu(t,q-1,n) dt \quad \lambda > 0.
	\end{align*}
	Now, by using \eqref{lapmu},
	\begin{align*}
& \frac{\Gamma(q+1)}{a^q} \sum_{n=0}^\infty\frac{1}{b} \int_0^\infty e^{-\frac{\lambda+b}{b} t} \mu(t,q-1,n) dt\\
&=\frac{\Gamma(q+1)}{a^q} \sum_{n=0}^\infty\frac{1}{b} \frac{1}{\left(1+\frac{\lambda}{b}\right)^{n+1}} \frac{1}{\left[\ln\left(1+\frac{\lambda}{b}\right)\right]^q}\\
&=\frac{\Gamma(q+1)}{a^q} \frac{1}{\left[\ln\left(1+\frac{\lambda}{b}\right)\right]^q} \frac{1}{b} \frac{b}{\lambda}\\
&=\frac{\Gamma(q+1)}{a^q \lambda \left[\ln(1+\frac{\lambda}{b})\right]^q}.
	\end{align*}
%	\begin{align*}
%\int_{0}^{\infty } e^{-\lambda t} &\left[\frac{b\ \Gamma(k+1)}{a^k} \int_0^t   e^{-bs} \mu(bs,k-1,-1) \ ds \right]dt %&= \frac{b \ \Gamma(k+1)}{a^k} \frac{1}{\lambda} \ \mathscr{L}[ e^{-bs} \mu(bs,k-1,-1)](\lambda)
%	%\\
%	=\\
%	&=\frac{b\ \Gamma(k+1)}{a^k} \frac{1}{\lambda} \int_0^\infty \int_0^\infty e^{-\lambda s} e^{-bs} \frac{(bs)^{y-1} y^{k-1}}{\Gamma(k) \Gamma(y)} ds \ dy
%	\\
%	&= \frac{b\ k}{a^k} \frac{1}{\lambda} \int_0^\infty \frac{b^{y-1} \ y^{k-1}}{\Gamma(y)} \frac{\Gamma(y)}{(\lambda + b)^y} dy\\
%	&=\frac{k}{a^k} \frac{1}{\lambda} \int_0^\infty y^{k-1} e^{-y \ln(1+\frac{\lambda}{b})} dy
%	\\
%	&=\frac{k}{a^k} \frac{1}{\lambda} \Gamma(k) \frac{1}{\left[\ln(1+\frac{\lambda}{b})\right]^k}=\frac{\Gamma(k+1)}{a^k \lambda \left[\ln(1+\frac{\lambda}{b})\right]^k}
%	\end{align*}
	and this proves the identity.
	
	 We now  prove \eqref{momenti_int}.	From (\cite[formula (13) in section 18.3]{EHIII}) we know
	 \begin{align}
	 \label{derivatemu}
	 \frac{d^n}{dx^n} \mu(x,\beta,\alpha)=\mu(x,\beta,\alpha-n) \ .
	 \end{align}	
	 Then, by integration by parts
	 \begin{align*}
	 & \int_0^t   e^{-bs} \mu(bs,q-1,-1) \ ds \\
	 &=\int_0^t   e^{-bs} \frac{d}{ds} \frac{1}{b}\mu(bs,q-1,0) \ ds \\
	 &=\frac{e^{-bs}}{b} \mu(bs,q-1,0) \big\vert_0^t + \int_0^t   e^{-bs} \mu(bs,q-1,0) \ ds\\
	 &= \frac{e^{-bt}}{b} \mu(bt,q-1,0) + \frac{e^{-bs}}{b} \mu(bs,q-1,1) \big\vert_0^t + \int_0^t   {e^{-bs}} \mu(bs,q-1,1) \ ds\\
	 &= \dots =\sum_{n=0}^\infty e^{-bt} \frac{\mu(bt,k-1,n)}{b}  .
	 \end{align*}
	From this we get 
	 \begin{align}
	 \label{momLtseriemu}
	 \mathbf{E}_0[L_t^q] 
	 = & \frac{\Gamma(q+1)}{a^q} \sum_{n=0}^\infty e^{-bt} {\mu(bt,q-1,n)}\\
	 = & \frac{b\ \Gamma(q+1)}{a^q} \int_0^t   e^{-bs} \mu(bs,q-1,-1) \ ds \ .
	 \end{align}
	 As $t \to 0$, both $\nu(t,-1)$ and $\mu(t,\beta,-1)$ diverge. We now analyze this singularity. First we can observe that for any $\varepsilon >0$ and $q \in [1,\infty)$,
	 \begin{align*}
	 \int_\varepsilon^t e^{-bs} \mu(bs,q-1,-1) ds < \infty \ .
	 \end{align*}
	 Then, we only consider the integral in \eqref{momenti_int} near the origin. We use the asymptotics (\cite{apelblat}, pages 178-179),
	 \begin{align}
	 \label{nu0}
	 \nu(t,-1) &\sim \frac{1}{t(\ln t)^2}, \quad t \to 0,\\
	 \label{mu0}
	 \mu(t, \beta,-1) &\sim \frac{(1- \xi)^\beta t^{-\xi}}{\Gamma(\beta+1) \Gamma(1-\xi)}, \quad 0<\xi<1, \ t \to 0.
	 \end{align}
	 For $\varepsilon >0$ small enough (in particular $\varepsilon < \frac{1}{b}$), by using \eqref{nu0}
	 \begin{align*}
	 \int_0^\varepsilon e^{-bs} \nu(bs,-1) \ ds \sim \int_0^\varepsilon \frac{e^{-bs}}{bs (\ln bs)^2} ds <\infty,
	 \end{align*}
	 from which we deduce convergence. 
	 
	 Furthermore, from the mean value theorem for integrals we know that $\exists c_\varepsilon \in (0,\varepsilon) $ such that
	 \begin{align*}
	 \int_0^\varepsilon e^{-bs} \nu(bs,-1) \ ds = \varepsilon e^{-b c_\varepsilon} \nu(b c_\varepsilon,-1) \ ,
	 \end{align*}
	 from which, by taking into account formula \eqref{nu0},
	 \begin{align*}
	 \lim_{\varepsilon \to 0} \int_0^\varepsilon e^{-bs} \nu(bs,-1) \ ds =\lim_{\varepsilon \to 0} \frac{e^{-b \varepsilon}}{(\ln \varepsilon)^2} = 0 \ .
	 \end{align*}
	 In conclusion, we have loss mass near zero and this confirms that  
	 \begin{align*}
	 \int_0^\varepsilon e^{-bs} \nu(bs,-1) \ ds < \infty \ .
	 \end{align*}
	 Thus, also by the previous argument we obtain convergence of the integral in \eqref{momenti_int} with $q=1$.\\
	 We can use the same argument for $q > 1$ through $\eqref{mu0}$.

	\item[(ii)]
	In \cite{kumarPS} the authors have considered the following proof: recall that 
	\begin{align*}
	\mathbf{E}_0\left[\int_0^\infty e^{-\lambda t} (L_t)^q \ dt \right]=\frac{\Gamma(q+1)}{a^k \lambda \left[\ln(1+\frac{\lambda}{b})\right]^q} ;
	\end{align*}
	since
	\begin{align*}
	\ln\left(1+\frac{\lambda}{b}\right) \sim \frac{\lambda}{b} \quad \text{as } \lambda \to 0,
	\end{align*}
	then
	\begin{align*}
	\mathbf{E}_0\left[\int_0^\infty e^{-\lambda t} (L_t)^q \ dt \right]\sim \frac{b^q}{a^q} \frac{\Gamma(q+1)}{\lambda^{q+1}} \quad \text{as } \lambda \to 0 .
	\end{align*}
	By Tauberian theorem (see e.g. \cite{levy_bertoin}) they conclude
	\begin{align*}
	\mathbf{E}_0[L_t^q] \sim \left(\frac{b \ t}{a} \right)^q \ \ \text{as} \ t \to \infty \ , \ q>0.
	\end{align*}
	We alternatively prove \eqref{asintmom} by considering our representation \eqref{momenti_int}. First we observe that
%	\begin{align}
%	\label{proofasymp}
%			\sum_{n=0}^\infty \frac{(bt)^{y+n}}{\Gamma(y+n+1)}=\frac{e^{bt}  \gamma(y,bt)}{\Gamma(y)} .
%	\end{align}
%	Indeed for $\lambda>b$, the Laplace transform of both sides of \eqref{proofasymp} coincides with 
%	\begin{align*}
%	?.
%	\end{align*}
%	From the fact that
%			\begin{align*}
%			\sum_{n=0}^\infty  \mu(bt,q-1,n)&=\sum_{n=0}^\infty \int_0^\infty \frac{(bt)^{n+y} y^{q-1}}{\Gamma(q) \Gamma(y+1+n)} dy
%			\end{align*}
%			and by using \eqref{proofasymp},
%		 from \eqref{momenti2} we get
					\begin{align}
					\mathbf{E}_0[L_t^q]&=
					\frac{b\ \Gamma(q+1)}{a^q} \int_0^t   e^{-bs} \mu(bs,q-1,-1) \ ds \notag\\
					&=\frac{b\ \Gamma(q+1)}{a^q} \int_0^\infty \int_0^t   e^{-bs} \frac{(bs)^{ y-1} y^{q-1}}{\Gamma(q) \Gamma(y)} dy \ ds \notag\\
					&=\frac{q}{a^q} \int_0^\infty \int_0^{bt}   e^{-z} \frac{z^{ y-1} y^{q-1}}{ \Gamma(y)} dy \ dz \notag\\
					&=\frac{q}{a^q}\int_{0}^{\infty} y^{q-1} \frac{\gamma(y,bt)}{\Gamma(y)}  dy \notag \\
					\label{newmomLt}
					&=\frac{q b^q}{a^q} \int_0^\infty y^{q-1} \frac{\gamma(by,bt)}{\Gamma(by)}  dy  
					\end{align}
			Since
			\begin{align*}
\frac{\gamma(by,bt)}{\Gamma(by)} \to 1, \quad t \to \infty \ ,
			\end{align*}
			and
			\begin{align*}
\int_0^\infty y^{q-1} dy=\lim_{t\to \infty}\int_0^t y^{q-1} dy \sim \frac{t^q}{q} ,
			\end{align*}
			then from \eqref{newmomLt} we obtain
			\begin{align*}
\mathbf{E}_0[L_t^q] \sim \left(\frac{bt}{a}\right)^q.
			\end{align*}
	\end{itemize}
\end{proof}
\begin{remark}
(Convergence)	In the last theorem we have proved that
	\begin{align*}
\int_0^t   e^{-bs} \mu(bs,q-1,-1) \ ds < \infty 
	\end{align*}
	 for any finite $t>0$. Since \eqref{momenti2} and \eqref{momenti_int} are equivalent, then we gain the absolute convergence on $(0,\infty)$ of the series \eqref{momenti2} .
\end{remark}

	\begin{figure}[h]
	\centering
	\includegraphics[width=5.5cm]{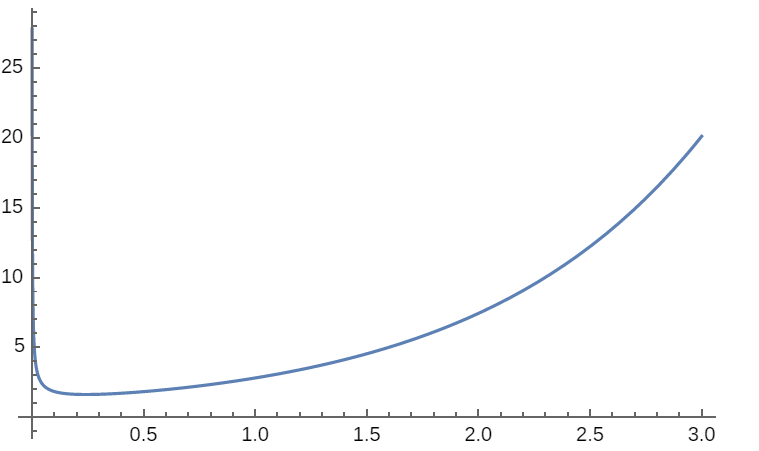}
	\caption{$\nu(x,-1)$ near $x=0$. The picture shows that $\nu(x,-1)$ goes to infinity as x goes to zero. The asymptotic behavior of $\nu(x,\alpha)$ near the origin depends on the sign $\alpha$. }
	\label{nu-1}
\end{figure}
\begin{remark}
	By exploiting the fact that $\mu(x,0,\alpha)=\nu(x,\alpha)$ we provide here a simple manipulation concerned with the first moment of $L_t$.
	From \eqref{ripLt} we know that
	\begin{align*}
	\mathbf{P}_0(L_t > x)=\frac{\gamma(ax,bt)}{\Gamma(ax)} .
	\end{align*}
	Then we can calculate the mean as
	\begin{align*}
	\mathbf{E}_0[L_t]&=\int_{0}^{\infty} \mathbf{P}_0(L_t>x) dx\\
	&=\int_{0}^{\infty} \frac{\gamma(ax,bt)}{\Gamma(ax)} dx \\
	&=\frac{1}{a} \int_{0}^{\infty} \frac{\gamma(y,bt)}{\Gamma(y)} dy \\
	&=\frac{1}{a} \int_{0}^{1} \frac{\gamma(y,bt)}{\Gamma(y)} dy + \frac{1}{a} \int_{1}^{2} \frac{\gamma(y,bt)}{\Gamma(y)} dy + \cdots .
\end{align*}	
By using \eqref{nugammainc} and \eqref{defnu} we obtain		
\begin{align*}
	\mathbf{E}_0[L_t] &=\frac{e^{-bt}}{a} \sum_{n=0}^\infty \nu(bt,n),
	\end{align*}
	which coincides with the result given in \eqref{momenti2} for $q=1$.
\end{remark}
\medskip
Let we focus on the convolution-type operator 
\begin{align*}
\mathfrak{D}_t^\Phi \ u(t)=\int_0^t u'(s) \overline{\Pi}(t-s) ds, \quad u \in AC(0,\infty)
\end{align*}
which has been introduced in \eqref{CapDef}. We notice that
\begin{align*}
\int_0^\infty e^{-\lambda t} \mathfrak{D}_t^\Phi u(t) \ dt = (\lambda \tilde{u} (\lambda) - u(0))\frac{\Phi(\lambda)}{\lambda} = \Phi(\lambda)\tilde{u}(\lambda) - \frac{\Phi(\lambda)}{\lambda} u(0)
\end{align*}
and here we assume that $u(0)\neq 0$. The last steps are justified by the formula \eqref{lapTailLm} and the Laplace transform of a convolution.

Focus now on the Brownian motion $B_t$ time-changed respectively with a Gamma subordinator $H_t$ and its inverse $L_t$. We respectively write $B_{H_t}$ and $B_{L_t}$. The governing equations are well-known. The first case leads to a Markov process with generator $(A,D(A))$ where $A=-\Phi(-\Delta)$ in the sense of Phillips. The second time change can be considered in order to solve the non-local equation
\begin{align*}
\mathfrak{D}^\Phi_t \varphi = \Delta \varphi. 
\end{align*} 
 Since $H_t$ and $L_t$ are independent from $B$, we can write
\begin{align}
\label{momentsVG}
\mathbf{E}_x[(B_{H_t})^q] = \mathbf{E}_0[ \mathbf{E}_x[(B_{H_t})^q | H_t]] = \mathbf{E}_x[(B_1)^q]\, \mathbf{E}_0[(H_t)^{q/2}]
\end{align}
and 
\begin{align}
\label{momentsInvVG}
\mathbf{E}_x[(B_{L_t})^q] = \mathbf{E}_0[ \mathbf{E}_x[(B_{L_t})^q | L_t]] = \mathbf{E}_x[(B_1)^q]\, \mathbf{E}_0[(L_t)^{q/2}].
\end{align}
Formula \eqref{momentsVG} can be obviously written by considering \eqref{momentsH} above. From Theorem \ref{teomomenti} we are now able to write \eqref{momentsInvVG} by replacing $\mathbf{E}_0[(L_t)^\frac{q}{2}]$ with \eqref{momenti2} or \eqref{momenti_int}.

\subsection{Potentials}

To the best of our knowledge, for the Laplace transforms 
\begin{align}
\label{potRep}
\int_0^\infty e^{-c x} l(t,x) dx \quad \text{and} \quad \int_0^\infty e^{-\lambda t} h(t,x) dt
\end{align}
with $c,\lambda >0$, there are no explicit representations. On the other hand, the relation between potentials (\cite[formula 5.5]{CapDov19})
\begin{align*}
\mathbf{E}_x \left[ \int_0^\infty e^{-\lambda t} f(L_t)dt \right] = \frac{\Phi(\lambda)}{\lambda} \mathbf{E}_x \left[ \int_0^\infty e^{-\lambda H_t} f(t) dt \right]
\end{align*}
can be explicitly verified for any symbol $\Phi$. For a general subordinator and the associated inverse process, the problem to obtain the representations \eqref{potRep} is still open. The only case in which we have a closed form for such objects is the case of $\alpha$-stable subordinator and its inverse process. 
That is, we respectively have
\begin{align*}
\mathcal{E}_\alpha (-c t^\alpha) \quad \text{and} \quad x^{\alpha -1} \mathcal{E}_{\alpha, \alpha}(-\lambda x^\alpha) \ ,
\end{align*}
where 
\begin{align*}
\mathcal{E}_{\alpha,\beta}(-z)= \sum_{k=0}^{\infty} \frac{(-z)^k}{\Gamma(\alpha k + \beta)}
\end{align*}
is the Mittag-Leffler function for which $\mathcal{E}_{\alpha}(z)=\mathcal{E}_{\alpha,1}(z)$.

In this section we direct our efforts in order to obtain explicit representations in \eqref{potRep} and  we begin our discussion by introducing the following result. 

\begin{theorem}
	\label{teolapLt}
	Let $u$ be the unique continuous solution on $[0,+\infty)$ to 
	\begin{align}
	\label{eqlapLt}
	\mathfrak{D}_t^\Phi u(t)&=-c\ u(t), \quad u(0)=1, \quad c > 0 \ .
	\end{align}
	Let $L_t$ be the inverse to a gamma subordinator. Then, we have that:
	\begin{itemize}
		\item[(i)] $u(t)=\mathbf{E}_0[e^{-cL_t}]$;
		\item[(ii)] $u(t)$ has the following representation
	\begin{align}
%	\label{lapLt}
%	u(t)&=1 + \sum_{k=1}^{\infty} \frac{b}{a} \left(\frac{1}{a}\right)^{k-1} (-c)^{k}  \int_0^t  e^{-bs} \mu(bs,k-1,-1) \ ds\\
	\label{lapLt2}
	u(t)&=1-\frac{c}{a} e^{-bt} \sum_{n=0}^\infty {e^{\frac{c}{a} n}} \nu(bt e^{-\frac{c}{a}}, n) \\
	\label{LapLt_int}
	&=1- b\frac{c}{a} e^{-\frac{c}{a}} \int_0^t e^{-bs} \nu(bs e^{-\frac{c}{a}},-1) ds
	\end{align}
\end{itemize}
\end{theorem}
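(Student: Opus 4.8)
The plan is to establish both parts through the Laplace transform machinery, exactly as in the proof of Theorem~\ref{teoH} and in the computation of the density $l(t,x)$, and then to read off the closed forms from the Volterra--function Laplace transform \eqref{lapnu}.

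\emph{Part (i).} I would first apply the time-Laplace transform to \eqref{eqlapLt}. Using the identity for $\mathfrak{D}_t^\Phi$ recalled just before the statement together with $u(0)=1$, the equation becomes $\Phi(\lambda)\widetilde u(\lambda)-\tfrac{\Phi(\lambda)}{\lambda}=-c\,\widetilde u(\lambda)$, whence $\widetilde u(\lambda)=\tfrac{\Phi(\lambda)}{\lambda(\Phi(\lambda)+c)}$ for $\lambda>0$. On the other hand, writing $\mathbf{E}_0[e^{-cL_t}]=\int_0^\infty e^{-cx}l(t,x)\,dx$ and integrating the $\lambda$-potential formula \eqref{lapL} against $e^{-cx}\,dx$ yields $\int_0^\infty e^{-\lambda t}\mathbf{E}_0[e^{-cL_t}]\,dt=\tfrac{\Phi(\lambda)}{\lambda}\int_0^\infty e^{-(c+\Phi(\lambda))x}\,dx=\tfrac{\Phi(\lambda)}{\lambda(\Phi(\lambda)+c)}$. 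Since $t\mapsto\mathbf{E}_0[e^{-cL_t}]$ is continuous, monotone non-increasing and bounded by $1$, hence lies in $\mathcal{M}_0\cap C(0,\infty)$, Lerch's theorem identifies the two and gives $u(t)=\mathbf{E}_0[e^{-cL_t}]$ (uniqueness being part of the hypothesis).

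\emph{Part (ii).} I would verify that the right-hand side of \eqref{lapLt2} has exactly the Laplace transform $\tfrac{\Phi(\lambda)}{\lambda(\Phi(\lambda)+c)}$ found in (i), so that \eqref{lapLt2} follows again by Lerch. Term by term, the substitution $x=bt\,e^{-c/a}$ and \eqref{lapnu} give $\int_0^\infty e^{-(\lambda+b)t}\nu(bt\,e^{-c/a},n)\,dt=\tfrac{e^{c/a}}{b}\tfrac{1}{\mu^{n+1}\ln\mu}$ with $\mu=(\lambda+b)e^{c/a}/b>1$; since $\ln\mu=\ln(1+\lambda/b)+\tfrac{c}{a}=\tfrac{\Phi(\lambda)+c}{a}$, this equals $\tfrac{a\,b^{n}}{(\lambda+b)^{n+1}e^{nc/a}(\Phi(\lambda)+c)}$. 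Summing over $n$, the factors $e^{nc/a}$ cancel and the geometric series $\sum_{n\ge0}\tfrac{b^n}{(\lambda+b)^{n+1}}=\tfrac1\lambda$ appears, so $\tfrac{c}{a}\sum_{n\ge0}e^{nc/a}\int_0^\infty e^{-(\lambda+b)t}\nu(bt\,e^{-c/a},n)\,dt=\tfrac{c}{\lambda(\Phi(\lambda)+c)}$, and the Laplace transform of the right-hand side of \eqref{lapLt2} is therefore $\tfrac1\lambda-\tfrac{c}{\lambda(\Phi(\lambda)+c)}=\tfrac{\Phi(\lambda)}{\lambda(\Phi(\lambda)+c)}$, as desired. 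Alternatively, and I would record this as a remark, one may expand $e^{-cL_t}=\sum_{q\ge0}\tfrac{(-c)^q}{q!}L_t^q$, take expectations using \eqref{momenti2}, interchange the two sums, and resum the inner sum over the Volterra index $q-1$ by \eqref{serienumu} with $\vartheta=-c/a$, $x=bt$, $\alpha=n$, which reproduces \eqref{lapLt2} directly. Finally, the passage from \eqref{lapLt2} to \eqref{LapLt_int} is obtained by iterated integration by parts using $\tfrac{d}{dx}\nu(x,\alpha)=\nu(x,\alpha-1)$ (the $\beta=0$ case of \eqref{derivatemu}) and $\nu(0,n)=0$ for $n\ge0$, exactly as in the proof of Theorem~\ref{teomomenti}.

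\emph{Main obstacle.} The analytic care lies in the convergence and interchange steps. In the Laplace route one needs each $\nu(\cdot,n)$-integral to be finite, which is precisely the condition $\mu>1$, guaranteed by $c>0$; the interchange of $\sum_n$ with $\int_0^\infty\!\cdots\,dt$ is then legitimate once the inner sum is recognised as a convergent geometric series. In the series route one must justify $\sum_q\tfrac{(-c)^q}{q!}\mathbf{E}_0[L_t^q]=\mathbf{E}_0[e^{-cL_t}]$, which follows from the super-exponential decay in $x$ of $l(t,x)$ (so that $\mathbf{E}_0[e^{cL_t}]<\infty$) together with Fubini. The integral representation \eqref{LapLt_int} involves $\nu(\cdot,-1)$, which blows up at the origin like $1/(x(\ln x)^2)$ by \eqref{nu0}; as in Theorem~\ref{teomomenti} one checks $\int_0^t e^{-bs}\nu(bs\,e^{-c/a},-1)\,ds<\infty$ for every finite $t$, which makes the boundary terms in the integration by parts vanish and also yields absolute convergence of the series in \eqref{lapLt2} on $(0,\infty)$.
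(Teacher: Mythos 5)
Your proof is correct and follows essentially the same route as the paper: Laplace transform of the equation combined with the $\lambda$-potential \eqref{lapL} and Lerch's theorem for (i), and a termwise application of \eqref{lapnu} (equivalently the moment expansion resummed via \eqref{serienumu}) for \eqref{lapLt2}. The only minor variation is that you obtain \eqref{LapLt_int} from \eqref{lapLt2} by iterating the integration by parts of Theorem \ref{teomomenti}, whereas the paper derives it directly from \eqref{momenti_int} together with \eqref{serienumu}; the two derivations are equivalent.
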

\begin{proof}
	\begin{itemize}
		\item[(i)] By applying the $\lambda-$Laplace transform w.r.t. $t$ in \eqref{eqlapLt} and by using 
		$$u(0)=1$$
		we obtain
		\begin{align*}
		\tilde{u}(\lambda) \Phi(\lambda )  + c\ \tilde{u}(\lambda )= \frac{\Phi(\lambda)}{\lambda}.
		\end{align*}
		Then,
		\begin{align*}
		\tilde{u}(\lambda)&=\frac{\Phi(\lambda)}{\lambda} \frac{1}{\Phi(\lambda)+c} = \frac{\Phi(\lambda)}{\lambda} \int_0^\infty e^{-cw} e^{-\Phi(\lambda) w} dw\\
		&=\int_0^\infty e^{-cw}\left(\int_0^\infty e^{-\lambda t} l(t,w) dt\right) dw,
		\end{align*}
		from which
		\begin{align*}
		u(t)=\int_0^\infty e^{-cw} l(t,w) \ dw=\mathbf{E}_0[e^{-cL_t}] \ .
		\end{align*} 	
		From Laplace machinery we have uniqueness. Indeed $u \in \mathcal{M}_0\cap C[0,\infty)$.
\item[(ii)] 
By applying the Laplace transform in the left-hand-side of \eqref{lapLt2}, we write
\begin{align*}
\int_0^\infty e^{-\lambda t} \mathbf{E}_0[e^{-c L_t}] dt &= \int_0^\infty e^{-cx} \frac{\Phi(\lambda)}{\lambda} e^{-x \Phi(\lambda)}=\frac{\Phi(\lambda)}{\lambda} \frac{1}{c+\Phi(\lambda)}, \quad \lambda>0.
\end{align*}
On the other hand
\begin{align*}
&\int_0^\infty e^{-\lambda t} \left[1-\frac{c}{a} e^{-bt} \sum_{n=0}^\infty {e^{\frac{c}{a} n}} \nu(bt e^{-\frac{c}{a}}, n)\right] dt=\\
&=\frac{1}{\lambda} -\frac{c}{a} \sum_{n=0}^\infty {e^{\frac{c}{a} n}} \int_{0}^{\infty} e^{-(\lambda +b) t} \nu(bt e^{-\frac{c}{a}}, n) dt\\
&=\frac{1}{\lambda} -\frac{c}{a} \sum_{n=0}^\infty {e^{\frac{c}{a} n}} \frac{e^\frac{c}{a}}{b} \int_{0}^{\infty} e^{-\frac{\lambda+b}{b}e^\frac{c}{a} t} \nu(t, n) dt,\\
\noalign{by taking into account the formula \eqref{lapnu}, we get that}
%&\int_0^\infty e^{-\lambda t} \left[1-\frac{c}{a} e^{-bt} \sum_{n=0}^\infty {e^{\frac{c}{a} n}} \nu(bt e^{-\frac{c}{a}}, n)\right] dt=\\
&=\frac{1}{\lambda} -\frac{c}{a} \sum_{n=0}^\infty \frac{1}{b} {e^{\frac{c}{a} (n+1)}} \frac{e^{-\frac{c}{a}(n+1) }}{\left(1+\frac{\lambda}{b}\right)^{n+1}} \frac{1}{\ln\left(1+\frac{\lambda}{b}\right) + \frac{c}{a}}\\
&=\frac{1}{\lambda}-\frac{c}{a} \frac{a}{a\ln\left(1+\frac{\lambda}{b}\right) + c} \frac{1}{\lambda}\\
&=\frac{\Phi(\lambda)}{\lambda} \frac{1}{\Phi(\lambda) +c} \ ,
\end{align*}
which is the claim for \eqref{lapLt2}.\\

Now we prove \eqref{LapLt_int}. Since $L_t \geq 0$, for $c >0$ the mean $\mathbf{E}_0[e^{-c L_t} ]$ is well-defined. From the Taylor expansion of the exponential we can write
\begin{align*}
\mathbf{E}_0[e^{-c L_t} ]&=1+ \sum_{k=1}^{\infty}\frac{(-c)^k}{k!}\mathbf{E}_0[L_t^k].
\end{align*}
From \eqref{momenti2} we get the new representation
\begin{align*}
%\label{lapLtold}
\mathbf{E}_0[e^{-c L_t} ]
&= 1+ \sum_{k=1}^{\infty} \frac{(-c)^k}{a^k} \sum_{n=0}^\infty e^{-bt} \mu(bt,k-1,n) \\
&=1-\frac{c}{a} e^{-bt} \sum_{n=0}^\infty \sum_{k=0}^\infty \left(-\frac{c}{a}\right)^k \mu(bt,k,n)\\
&=[{\text{by using \eqref{serienumu}}}]\\
&=1-\frac{c}{a} e^{-bt} \sum_{n=0}^\infty {e^{\frac{c}{a} n}} \nu(bt e^{-\frac{c}{a}}, n) 
\end{align*}
which has been obtained in \eqref{lapLt2}.\\
By considering \eqref{momenti_int} instead of \eqref{momenti2}, we have
\begin{align*}
\mathbf{E}_0[e^{-c L_t} ]&=1+ \sum_{k=1}^{\infty}\frac{(-c)^k}{k!}\mathbf{E}_0[L_t^k]\\
&=1+ \sum_{k=1}^{\infty}\frac{b}{a^k} {(-c)^k} \int_0^t e^{-bs} \mu(bs,k-1,-1) ds,\\
&=[{\text{by using \eqref{serienumu}}}]\\
&=1-b \frac{c}{a} \int_0^t e^{-bs} e^{-\frac{c}{a}} \nu(bs e^{-\frac{c}{a}},-1) ds\\
&=1-b \frac{c}{a}  e^{-\frac{c}{a}} \int_0^t e^{-bs} \nu(bs e^{-\frac{c}{a}},-1) ds.
\end{align*}
Using the same argument for the convergence of \eqref{momenti_int}, we have that for any finite $t>0$ also the integral in \eqref{LapLt_int} converges.
\end{itemize}
\end{proof}
\begin{remark}
(Convergence)	From the convergence of \eqref{LapLt_int}, and from the equivalence between \eqref{lapLt2} and \eqref{LapLt_int} we have also that the series in \eqref{lapLt2} converges absolutely on $(0,\infty)$.
\end{remark}
Now we move to the next result.

\begin{theorem}
	\label{lemmahtilde}
	For $x \in (0, \infty)$, we have that
\begin{align}
\label{htilde}
\int_0^\infty e^{- \lambda t} h(t,x) dt=e^{-bx} \ \frac{b}{a} \ e^{-\frac{\lambda}{a}} \ \nu(b \ x \ e^{-\frac{\lambda}{a}}, -1), \quad \lambda > 0.
\end{align}
\begin{proof}
	Let us write $\tilde{h}(\lambda,x):=\int_0^\infty e^{- \lambda t} h(t,x) dt$, for positive $x$. Then,
\begin{align*}
\tilde{h}(\lambda,x)&= \int_0^\infty e^{- \lambda t} \frac{b^{at}}{\Gamma(at)} x^{at-1} e^{-bx} dt =\frac{e^{-bx}}{x} \int_0^\infty \frac{e^{-t(\lambda - a \ln b - a \ln x)}}{\Gamma(at)} dt
\\
& = \frac{e^{-bx}}{a x} \int_0^\infty \frac{e^{-\frac{z}{a}(\lambda - a \ln b - a \ln x)}}{\Gamma(z)} dz.
\end{align*}
By using (\cite[formula 6.423]{table})
\begin{align*}
\int_0^\infty e^{-\alpha x} \frac{dx}{\Gamma(x+\beta+1)}=e^{\beta \alpha} \nu(e^{-\alpha},\beta),
\end{align*}
we have
\begin{align}
\label{hnu}
\frac{e^{-bx}}{a x} \int_0^\infty \frac{e^{-\frac{z}{a}(\lambda - a \ln b - a \ln x)}}{\Gamma(z)} dz&= \frac{e^{-bx}}{a x} e^{-\frac{1}{a} (\lambda - a \ln b - a \ln x)} \ \nu(e^{-\frac{1}{a} (\lambda - a \ln b - a \ln x)},-1) \notag \\
&=e^{-bx} \ \frac{b}{a} \ e^{-\frac{\lambda}{a}} \ \nu(b \ x \ e^{-\frac{\lambda}{a}}, -1) .
\end{align}
This concludes the proof.
%An alternative representation can be obtained by considering \eqref{serienumu} and \eqref{hnu}, that is
%\begin{align*}
%\displaystyle
%\tilde{h}(\lambda,x)=\sum_{k=0}^{\infty} e^{-bx} \ \frac{b}{a}  \left(-\frac{\lambda}{a}\right)^k \mu(bx,k,-1).
%\end{align*}
\end{proof}
\end{theorem}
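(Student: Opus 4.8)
The plan is to obtain \eqref{htilde} by computing the time-Laplace transform directly from the explicit density \eqref{repHsing} and then recognizing a classical integral representation of the Volterra function $\nu$. For fixed $x>0$ I would substitute $h(t,x)=\frac{b^{at}}{\Gamma(at)}x^{at-1}e^{-bx}$ into $\widetilde h(\lambda,x):=\int_0^\infty e^{-\lambda t}h(t,x)\,dt$ and pull out the factor $e^{-bx}/x$. Writing $b^{at}x^{at}=e^{at(\ln b+\ln x)}$ turns the integrand into $e^{-t(\lambda-a\ln b-a\ln x)}/\Gamma(at)$, and after the substitution $z=at$ the transform becomes $\frac{1}{a}\int_0^\infty e^{-\alpha z}\,\Gamma(z)^{-1}\,dz$ with $\alpha=\frac1a(\lambda-a\ln b-a\ln x)$, so that $e^{-\alpha}=bx\,e^{-\lambda/a}$.

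Next I would invoke the integral identity $\int_0^\infty e^{-\alpha z}\,\Gamma(z+\beta+1)^{-1}\,dz=e^{\beta\alpha}\,\nu(e^{-\alpha},\beta)$ (formula 6.423 in \cite{table}), specialized to $\beta=-1$, which gives $\int_0^\infty e^{-\alpha z}\,\Gamma(z)^{-1}\,dz=e^{-\alpha}\,\nu(e^{-\alpha},-1)$. Collecting the prefactors then yields $\widetilde h(\lambda,x)=\frac{e^{-bx}}{ax}\cdot bx\,e^{-\lambda/a}\,\nu(bx\,e^{-\lambda/a},-1)=e^{-bx}\,\frac ba\,e^{-\lambda/a}\,\nu(bx\,e^{-\lambda/a},-1)$, which is the claim. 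Alternatively, one can reach the same point without quoting 6.423: after the substitution $z=at$ the integral $\int_0^\infty e^{-\alpha z}\Gamma(z)^{-1}\,dz$ is, up to the elementary factor $e^{-\alpha}$, exactly the defining integral \eqref{defnu}--\eqref{defmu} of $\nu(e^{-\alpha},-1)$ written as $\int_0^\infty y^{-1}e^{-\alpha}\Gamma(y)^{-1}\,dy$-type expression, so the identity is essentially a change of variable inside the definition of $\nu$.

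For rigor I would record two checks. Convergence of $\int_0^\infty e^{-\alpha z}\,\Gamma(z)^{-1}\,dz$ is immediate and holds for every real $\alpha$, since $\Gamma(z)^{-1}\sim z$ as $z\to0^+$ and $\Gamma(z)$ grows faster than any exponential as $z\to\infty$; in particular the identity is valid whether or not $bx\,e^{-\lambda/a}$ lies below $1$, and the right-hand side is meaningful because \eqref{defnu}--\eqref{defmu} define $\nu(\cdot,-1)$ on all of $(0,\infty)$. As a consistency cross-check, applying the Laplace transform in $x$ to both sides and using \eqref{lapnu} (whose hypothesis is met here, as the relevant transform parameter equals $(1+\xi/b)e^{\lambda/a}>1$) should recover $\int_0^\infty e^{-\xi x}\widetilde h(\lambda,x)\,dx=(\lambda+\Phi(\xi))^{-1}$, in agreement with \eqref{laph} and \eqref{LapH}.

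The main difficulty is not an analytic estimate but the bookkeeping: one must correctly specialize formula 6.423 to $\beta=-1$ and carefully track the chain of exponential factors so that $b^{at}x^{at}e^{-\lambda t}$ becomes $e^{-\alpha z}$ with the right $\alpha$. A secondary subtlety worth flagging is that the elementary Laplace-transform formula \eqref{lapnu} for $\nu$ is stated only for transform parameter larger than $1$, so a proof routed through \eqref{lapnu} rather than through the direct integral identity would need an analytic-continuation argument in $\lambda$; the direct computation outlined above avoids this issue.
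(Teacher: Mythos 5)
Your proposal is correct and follows essentially the same route as the paper: insert the explicit gamma density, pull out $e^{-bx}/x$, substitute $z=at$, and apply formula 6.423 of \cite{table} with $\beta=-1$ (equivalently, recognize the definition \eqref{defnu} directly), with the same bookkeeping $e^{-\alpha}=bx\,e^{-\lambda/a}$. The additional convergence remark and the consistency check via the $x$-Laplace transform are sound but not needed beyond what the paper already does.
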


\begin{proof}[Alternative proof concerning Theorem \ref{thm:Sonine}]
From Theorem \ref{lemmahtilde} we are able to write
		\begin{align*}
		\kappa(x)&=\int_0^\infty h(t,x) dt= \lim_{\lambda \to 0} \int_0^\infty e^{-\lambda t} h(t,x) dt=\\
		&=\lim_{\lambda \to 0} e^{-bx} \ \frac{b}{a} \ e^{-\frac{\lambda}{a}} \ \nu(b \ x \ e^{-\frac{\lambda}{a}}, -1)=e^{-bx} \ \frac{b}{a}   \nu(bx,-1)
		\end{align*}
which gives an alternative proof of the result.
\end{proof}

\section*{Acknowledgment}
The authors thank Sapienza for the support under the Grant Ateneo 2020.

\medskip
\small{
\bibliographystyle{plain} % We choose the "plain" reference style
\bibliography{invgammaref.bib} % Entries are in the refs.bib file
%	\bibitem{handbook} 
%@book{	AUTHOR = {Abramowitz, Milton and Stegun, Irene A.},
%	TITLE = {Handbook of mathematical functions with formulas, graphs, and
%		mathematical tables},
%	SERIES = {National Bureau of Standards Applied Mathematics Series, No.
%		55},
%	NOTE = {For sale by the Superintendent of Documents},
%	PUBLISHER = {U. S. Government Printing Office, Washington, D.C.},
%	YEAR = {1964},
%	PAGES = {xiv+1046},
%	MRCLASS = {33.00 (65.05)},
%	MRNUMBER = {0167642},
%	MRREVIEWER = {D. H. Lehmer},
}
\Addresses
\end{document}